\numberwithin{equation}{section}
\newtheorem{thm}{Theorem}[section]
\newtheorem{lem}{Lemma}[section]
\newtheorem{cor}{Corollary}[section]
\newtheorem{rem}{Remark}[section]
\newtheorem{defn}{Definition}[section]
\newtheorem{con}{Condition}[section]
\newcommand{\nn}{\nonumber}
\newenvironment{proof of theorem 1.1}{{\it Proof of Theorem 1.1}.}{{\hfill $\square$%
    \hskip - \parfillskip}}
\begin{document}
\title [Asymptotic Convergence of Fully Nonlinear Curvature Flows]{Asymptotic Convergence for a Class of Fully Nonlinear Contracting Curvature Flows* }
\author{Yusha Lv }

\address{\parbox[l]{1\textwidth}{School of Mathematics and Statistics,
 Qilu University of Technology (Shandong Academy of Sciences), Ji'nan 250353,
China
}}
\email{yslv@qlu.edu.cn}

\author{Hejun Wang**}
\address{\parbox[l]{\textwidth}{School of Mathematics and Statistics,
Shandong Normal University, Ji'nan 250014, China}}
\email{wanghjmath@sdnu.edu.cn}

\subjclass[2010]{53C44, 35K55} \keywords{Fully nonlinear curvature flow; Asymptotic behavior; $F^\beta$-Flow. }

\thanks{*Supported in part by
Natural Science Foundation of Shandong (No.ZR2020QA003, No.ZR2020QA004) and China Postdoctoral Science Foundation (No.2020M682222)}
\thanks{**The corresponding author}

\maketitle

\begin{abstract}
In this paper, we study a class of fully nonlinear contracting curvature flows of closed, uniformly convex hypersurfaces in the Euclidean space $\mathbb R^{n+1}$ with the normal speed $\Phi$
given by $r^\alpha F^\beta$ or $u^\alpha F^\beta$, where $F$ is a monotone, symmetric, inverse-concave, homogeneous of degree one function of the principal curvatures, $r$ is the distance from the hypersurface to the origin and $u$ is the support function of hypersurface.
If  $\alpha\geq \beta+1$ when $\Phi=r^\alpha F^\beta$ or $\alpha> \beta+1$ when $\Phi=u^\alpha F^\beta$, we prove that the flow exists for all times and converges to the origin.  After proper rescaling, we prove that the normalized flow converges exponentially in the $C^\infty$ topology to a sphere centered at the origin.  Furthermore, for special inverse concave curvature function $F=K^{\frac{s}{n}}F_1^{1-s}(s\in(0, 1])$, where $K$ is Gauss curvature and $F_1$ is inverse-concave, we obtain the asymptotic convergence for the flow with $\Phi=u^\alpha F^\beta$ when $\alpha=\beta+1$. If $\alpha<\beta+1$, a counterexample is given for the above convergence when speed equals to $r^\alpha F^\beta$.
\end{abstract}

\section{Introduction}\label{intro}
Let's begin by reviewing the known results on the behavior of
contracting curvature flows. For the classical mean curvature flow,  Huisken \cite{H}  proved that convex hypersurfaces contract to a point in finite time,
becoming spherical in shape as the limit is approached. Later, Huisken \cite{H1} extended the contracting mean curvature flow to general Riemannian manifolds with suitable assumptions on curvatures. These results were established for a class of curvature flows with speed given by curvature functions with degree of homogeneity equals to one in principal curvatures, see \cite{A,A2,Chow,Chow1,G1}.

However, for contracting curvature flows with degree of homogeneity greater than one in principal curvatures, the behavior is more difficult to establish.   Under the assumption that initial hypersurface is
sufficiently pinched in the sense that $h_{ij}\geq C(\beta)Hg_{ij}$, Chow \cite{Chow} proved that flows by $K^\beta$ with $\beta\geq 1/n$  shrink convex hypersurfaces to round point in finite time.
Similar results were obtained for powers of the scalar curvature \cite{Al,AS}, and for $K^\beta$ with $\beta> 1/(n+2)$ without pinching condition \cite{BCD}.
Recently, with different pinching condition, Schulze \cite{Sc1,Sc2}, Guo, Li and Wu \cite{GLW,GLW1} proved that the convergence of flows with speeds equal to a power of the mean curvature were still hold in Euclidean space and in hyperbolic space, respectively.

Recently, several papers have considered the curvature flows with speed given by general curvature functions with degree of homogeneity greater than one.
For flows with speed equals to a power of curvature function $F$, where $F$ is inverse concavity and its dual function approaches zero on the boundary of the positive cone, Andrews, McCoy and Zheng \cite{AMZ} proved that flows contract to a point in finite time. Without the assumption of inverse concavity on $F$, the convergence of flows, for which speeds are homogeneous functions of degree greater than one,  was established by Andrews and McCoy \cite{AM}, if the initial hypersurface is pinched in the sense that $||\AA||^2\leq \delta H^2$.
More recently, Li and the first author in \cite{LL,LL1} proved contracting curvature flows in space forms, for which speed is given by a power of curvature function $F$, produce a spherical shape with a different pinching condition. For constrained
flows, the convergence to a round sphere was established in \cite{ACW,AW,BP,BS1,CS,GLW2}.

Curvature flow with speed depending not only on the curvatures has been considered. Bryan-Ivaki \cite{BI} and Ivaki \cite{Iva} studied the flows with speed given by curvatures and support function. For curvature flows with speed depending on curvatures and radial function is considered in \cite{LSW1} and \cite{LSW}. Recently, for expanding curvature flows, Ding and Li \cite{DL,DL1} considered the flow with speed given by $u^\alpha F^{-\beta}$, where $F$ is a monotone, symmetric, homogeneous of degree one function of the principal curvatures. When $F=\sigma_k$, the flow has been studied by Sheng and Yi in \cite{SY1}. For contracting curvature flows, Li, Sheng and Wang \cite{LSW1} studied the flows with speed given by $fr^\alpha K$, and they proved that, when $\alpha\geq n+1$, the normalised flow converges exponentially to a sphere centered at the origin in the $C^\infty$-topology. In \cite{LSW}, Li, Sheng and Wang considered a class of fully nonlinear curvature flows with speed equals to $r^\alpha E_k$, where $E_k$ is the $k$-th mean curvature, and obtained similar results  when $\alpha\geq k+1$. For an anisotropic contracting curvature flow with speed given by $fu^\alpha K^\beta$, Sheng and Yi \cite{SY} proved the existence and  convergence after appropriate normalisation. For the flow with speed equals to $r^\alpha E_k^\beta$, Li, Xu and Zhang \cite{LXZ} studied the convergence of $k$-convex and star-shaped hypersurfaces. This kind of curvature flows can be used to deal with the Minkowski problems \cite{BI,CHZ,Iva,LSW1,LL20,SY}.

Motivated by above results, in this paper, we  consider the following problem: Let $X_0: M^n\rightarrow \mathbb R^{n+1}$ be a smooth immersion of an $n$-dimensional closed convex hypersurface. We consider a one-parameter family of smooth immersions $X(M, t): M\times [0, T)\rightarrow \mathbb R^{n+1}$ which satisfy
\begin{align}\label{1-1}
    \begin{cases}\frac{\partial}{\partial t}X(x,t)=-r^\alpha F^\beta(\mathscr{W}(x,t))\nu(x,t),  \\X(\cdot,0)=X_0(\cdot),\end{cases}
\end{align}
or
\begin{align}\label{1-1b}\tag{1.1$'$}
    \begin{cases}\frac{\partial}{\partial t}X(x,t)=-u^\alpha F^\beta(\mathscr{W}(x,t))\nu(x,t),  \\X(\cdot,0)=X_0(\cdot), \end{cases}
\end{align}
where $\beta\geq 1$, $\nu(x,t)$ denotes the outer unit normal to the evolving hypersurface $M_t=X(M, t)$ at the point $X(x,t)$, $\mathcal W$ is the matrix of the Weingarten map, $r=|X(x, t)|$ is the distance from the point  $X(x, t)$ to the origin which called the radial function of $M_t$, $u$ is the support function of  $M_t$, and the function $F(\mathcal W)$ satisfies the following conditions:
\begin{con}\label{con-1}
\begin{enumerate}[(i)]
\item $ F(\mathcal W)=f(\lambda(\mathcal W))$, where $\lambda(\mathcal W)$ gives the eigenvalues of $\mathcal W$ and $f$ is a smooth, symmetric function
defined on $\Gamma_+=\{\lambda=(\lambda_1,\cdots,\lambda_n)\in \mathbb R^n: \lambda_i>0, i=1, \cdots, n\}$;

\item $f$ is strictly increasing in each argument: $\frac{\partial f}{\partial\lambda_i}>0$ on $\Gamma$, $\forall~i=1, \cdots, n$;

\item $f$ is homogeneous of degree one: $f(k\lambda)=k f(\lambda)$ for any $k>0$;

\item $f$ is strictly positive on $\Gamma_+$ and normalized to have $f(1, \cdots, 1)=1$.
\item $f$ is inverse concave, that is, the function
\begin{align*}
  f_*(\lambda_1, \cdots, \lambda_n)=f(\lambda_1^{-1},\cdots,\lambda_n^{-1})^{-1}
\end{align*}
is concave.

\item $f_*$ approaches zero on the boundary of $\Gamma_+$.
\end{enumerate}
\end{con}

\begin{rem}
There are many examples of curvature functions satisfying conditions (v) and (vi), for example, $F=E_k^{1/k}, k=1,\cdots,n$, the power
means  $F=(\frac{1}{n}\mathop{\sum}\limits_{i}\lambda_i^r)^{\frac{1}{r}}, r>0$ and convex function $F$. More examples can be constructed as
follows: If curvature functions $G_1$ and $G_2$ satisfy Condition \ref{con-1}, then $F=G^{s}_1 G^{1-s}_2$ satisfies
Condition \ref{con-1} for any $s\in[0, 1]$. (see \cite{A4,ALM} for more examples).
\end{rem}

We have the following asymptotic behaviour.
\begin{thm}\label{thm1}
Let $M_0$ be a smooth, closed and uniformly convex hypersurface in $\mathbb R^{n+1}$
enclosing the origin. If

(i) $\alpha\geq \beta+1$, then the flow \eqref{1-1}  has a unique smooth and uniformly
convex solution $M_t$ for all time $t>0$, which converges to the origin.

(ii) $\alpha>\beta+1$, then the flow \eqref{1-1b} has a unique smooth and uniformly
convex solution $M_t$ for all time $t>0$, which converges to the origin.

Furthermore, the rescaled hypersurface $\tilde M_t=\varphi(t)M_t$ converges exponentially in the $C^\infty$ topology to a sphere centered at the origin, where
\begin{align*}
\varphi(t)=\begin{cases}e^t, \quad\quad\quad\quad\quad\quad\quad\quad\quad\quad\quad\quad \alpha=\beta+1,
\\ \big(1+(\alpha-\beta-1)t\big)^{\frac{1}{\alpha-(\beta+1)}}, \quad\quad\alpha\neq \beta+1.\end{cases}
\end{align*}
\end{thm}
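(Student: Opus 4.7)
The plan is to reformulate each flow as a scalar fully nonlinear parabolic equation for the support function $u(\xi,t)$ on $\mathbb{S}^n$ via the Gauss-map parametrization, which is available as long as uniform convexity is preserved. Short-time existence and uniqueness then follow from the classical parabolic theory under Condition \ref{con-1}. To study the asymptotics, I would simultaneously pass to the rescaled flow $\tilde X = \varphi(t) X$ and a new time variable, chosen so that the rescaled support function $\tilde u = \varphi(t) u$ satisfies an autonomous PDE admitting the unit sphere as a stationary solution; a direct computation forces $\varphi$ to be exactly the function in the statement, which is where the dichotomy $\alpha = \beta+1$ vs.\ $\alpha \neq \beta+1$ appears.

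The heart of the argument is a hierarchy of a priori estimates for $\tilde u$. First, $C^0$ bounds: $\max_{\mathbb{S}^n} \tilde u$ is non-increasing and $\min_{\mathbb{S}^n} \tilde u$ is bounded below away from $0$, obtained by the maximum principle at spatial extrema; the hypothesis $\alpha \geq \beta+1$ for \eqref{1-1} (resp.\ $\alpha > \beta+1$ for \eqref{1-1b}) is exactly what lets the rescaling term control the nonlinear speed at the extrema and guarantees the origin stays enclosed. Second, a gradient/pinching estimate via the maximum principle on a scale-invariant auxiliary function; combined with the $C^0$ bound this yields $|X| \sim \varphi(t)^{-1}$ and hence the convergence of $M_t$ to the origin. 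Third, and I expect this to be the main technical obstacle, the two-sided curvature bound that preserves uniform convexity. An upper bound on $F^\beta$ follows from a Tso-type auxiliary function of the form $F^\beta/(u-c)$. The delicate step is the upper bound on the individual principal curvatures: following the strategy of Andrews--McCoy--Zheng \cite{AMZ}, I would apply the maximum principle to $-f_*$ evaluated on the principal radii, where inverse-concavity (Condition \ref{con-1}(v)) produces the good second-order term needed to absorb the bad terms coming from the zeroth-order factors $r^\alpha$ or $u^\alpha$, and the boundary behaviour $f_* \to 0$ on $\partial \Gamma_+$ (Condition \ref{con-1}(vi)) prevents a principal curvature from blowing up. Higher regularity is then standard via Krylov--Evans and Schauder theory.

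With uniform $C^k$ bounds in hand, long-time existence of the uniformly convex rescaled flow is established. To identify the limit, I would exhibit a monotone functional along the rescaled flow -- for instance a scale-invariant integral of $\tilde u$ or its reciprocal, or the ratio of an isoperimetric quantity to a power of the enclosed volume -- whose time derivative is non-positive, with equality only on a centred sphere, forcing every subsequential limit to be the unit sphere. Finally, to upgrade smooth convergence to \emph{exponential} convergence in $C^\infty$, I would linearise the rescaled PDE at $\tilde u \equiv 1$: the linearised operator on $\mathbb{S}^n$ is essentially $c_1 \Delta_{\mathbb{S}^n} + c_2\,\mathrm{Id}$ with explicit positive constants, so spherical-harmonic analysis produces a strictly positive spectral gap once the $\ell = 1$ translation modes are killed (this is where the $C^0$ center-of-mass control and the fact that the origin is preserved are used). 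Standard linearised-stability arguments combined with interpolation against the uniform higher-derivative bounds then give exponential decay in every $C^m$ norm.
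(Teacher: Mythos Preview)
Your architecture is largely on target and parallels the paper: Gauss-map parametrization, $C^0$ bounds by the maximum principle, a Tso-type auxiliary function for the speed, inverse concavity for curvature pinching, and Krylov--Safonov for higher regularity. Two points deserve correction or simplification.

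First, you have the roles of the two curvature estimates reversed. The upper bound on the principal curvatures is the \emph{easy} step: once $F\le C$, the dual satisfies $F_*=1/F\ge c$, and Condition~\ref{con-1}(vi) ($f_*\to 0$ on $\partial\Gamma_+$) immediately forces each $1/\lambda_i$ away from zero. The delicate step is the \emph{lower} bound on $\lambda_i$, i.e.\ the upper bound on the largest principal radius $b^{11}$. The paper attacks this with the auxiliary function $\log b^{11}-\log u$ together with the inverse-concavity inequality $(\ddot F^{ij,kl}+2\dot F^{ik}b^{jl})B_{ij}B_{kl}\ge 2F^{-1}(\dot F^{ij}B_{ij})^2$. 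In the $r^\alpha$ case two further ingredients are essential and absent from your outline: a uniform star-shapedness lemma (Lemma~\ref{lem3-5}) is needed to bound $\nabla^2_{11}r$ from below, and the hypothesis $\alpha\ge\beta+1$ is invoked \emph{here}---not only in the $C^0$ estimate---to absorb the cross term $\alpha\beta(\nabla_1\log r)(\nabla_1\log F)$ via Cauchy--Schwarz. Likewise, in the $u^\alpha$ case the strict inequality $\alpha>\beta+1$ is what closes the estimate for $b^{11}$.

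Second, your convergence scheme is over-engineered, and the monotone-functional step is not obviously available: no natural monotone integral is known for these flows with the anisotropic factor $r^\alpha$ or $u^\alpha$. The paper's route is far more direct. For $\alpha>\beta+1$ one sandwiches the radial function between two explicit spherical barrier solutions of the rescaled flow, both of which converge exponentially to $1$; this already gives exponential $C^0$ decay of $r-1$. For $\alpha=\beta+1$ one shows by a maximum-principle computation on $Q=\tfrac12|D\log r|^2$ that $\partial_t Q\le -\gamma Q$, yielding exponential decay of $|Dr|/r$. In either case, exponential $C^k$ convergence then follows immediately from Hamilton's interpolation inequality combined with the uniform higher-order a priori bounds already established---no monotone functional, linearisation, or spectral-gap analysis is required.
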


\begin{rem}
 The case $F=E_k^{\frac{1}{k}}$ and $\beta=k$ of \eqref{1-1} in Theorem \ref{thm1} reduces to Theorem 1.1 in \cite{LSW}.
\end{rem}

In particular, for special inverse concave curvature function $F=K^{\frac{s}{n}}F_1^{1-s}(s\in(0, 1])$, where $K$ is the Gauss curvature and $F_1$ satisfies Condition \ref{con-1}, the convergence of flow \eqref{1-1b} also holds when $\alpha=\beta+1$.

\begin{thm}\label{thm4}
Suppose curvature function $F_1$ satisfies Condition \ref{con-1} and $F=K^{\frac{s}{n}}F_1^{1-s}$, where $K$ is the Gauss curvature and $s\in(0, 1]$. Let $M_0$ be a smooth, closed and uniformly convex hypersurface in $\mathbb R^{n+1}$
enclosing the origin. If $\alpha=\beta+1$, then the flow \eqref{1-1b} has a unique smooth uniformly
convex solution $M_t$ for all time $t>0$, which converges to the origin.
Furthermore, the rescaled hypersurface $\tilde M_t=e^tM_t$ converges exponentially in the $C^\infty$ topology to a sphere centered at the origin.
\end{thm}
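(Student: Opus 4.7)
The plan is to parametrize by the outward unit normal and work with the support function $u(\xi,t)$ on $\mathbb S^n$, which satisfies $\partial_t u=-u^\alpha F^\beta([b_{ij}]^{-1})$ with $b_{ij}=\nabla_i\nabla_j u+u\,\delta_{ij}$, since the projection of the velocity onto $\xi$ equals the speed. Setting $\tilde u(\xi,t)=e^t u(\xi,t)$ and using the scaling identity $u^\alpha F^\beta(\mathcal W)=e^{(\beta-\alpha)t}\tilde u^\alpha F^\beta(\tilde{\mathcal W})$, the hypothesis $\alpha=\beta+1$ cancels every exponential factor and produces the \emph{autonomous} equation
\begin{equation*}
  \partial_t\tilde u \;=\; \tilde u \;-\; \tilde u^{\,\beta+1}\,F^\beta\!\bigl([\tilde b_{ij}]^{-1}\bigr)
  \qquad\text{on } \mathbb S^n\times[0,\infty).
\end{equation*}
Every positive constant is a stationary solution, so the critical case is genuinely degenerate and there is no ``contraction margin'' $\alpha-\beta-1>0$ to exploit, as there was in Theorem~\ref{thm1}(ii).

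For $C^0$ control, at a spatial maximum of $\tilde u$ the Hessian inequality $\nabla^2\tilde u\le 0$ forces $\tilde b\le \tilde u\,I$; monotonicity, degree-one homogeneity, and the normalization $F(I)=1$ then give $F^\beta([\tilde b]^{-1})\ge \tilde u^{-\beta}$, hence $\partial_t\tilde u_{\max}\le 0$, with the symmetric bound at the minimum. The core of the proof is the $C^2$ estimate. A Tso-type argument applied to a quantity like $F^\beta/(\tilde u-\varepsilon)$, closed off by the concavity of $F_*$, controls the principal curvatures from above. The delicate step is the \emph{upper} bound on the principal radii $\tilde\mu_i$, equivalently a positive lower bound on the Gauss curvature $\tilde K$ of $\tilde M_t$; in the general inverse-concave setting this is precisely the estimate that breaks when $\alpha=\beta+1$. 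The hypothesis $F=K^{s/n}F_1^{1-s}$ rescues it by placing a genuine Monge--Amp\`ere factor $K^{s\beta/n}$ into the speed: regions where $\tilde K$ threatens to degenerate are strongly damped, and a maximum-principle argument on a quantity such as $\log\tilde K/(\tilde u-\varepsilon)$ in the spirit of Chow / Andrews--Guan--Ni for Gauss curvature flows, with the residual nonlinear terms absorbed by the inverse-concave contribution from $F_1^{(1-s)\beta}$, should yield the desired lower bound on $\tilde K$.

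Once $C^0$ and $C^2$ bounds are available, inverse concavity makes the equation concave in the Hessian, so Krylov--Safonov and Evans--Krylov furnish $C^{2,\gamma}$ estimates and Schauder bootstrap upgrades these to uniform $C^\infty$ estimates; this also provides long-time existence. For the asymptotic rate, I would linearize the autonomous equation around a stationary sphere $\tilde u\equiv R$: writing $\tilde u=R+\varepsilon v$ and using $\alpha=\beta+1$ together with the Euler relation $\sum\lambda_i\partial_i f=f$ (which forces $\partial_i f(1,\dots,1)=1/n$), a direct computation collapses the linearization to $\partial_t v=\frac{\beta}{n}\Delta_{\mathbb S^n}v$. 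The spectrum of $-\Delta_{\mathbb S^n}$ on the orthogonal complement of the constants is bounded below by $n$, so every nonconstant spherical-harmonic mode decays at rate at least $\beta$; a standard interpolation/stability argument combining this linearized decay with the uniform $C^\infty$ estimates then yields exponential $C^\infty$ convergence of $\tilde u$ to some constant $R_\star>0$. In particular the first spherical-harmonic modes decay, so the limit sphere is centered at the origin, and $M_t=e^{-t}\tilde M_t$ contracts to the origin. The hardest part of this program is the lower bound on $\tilde K$ in the second paragraph: overcoming it is exactly the new analytic content supplied by the assumption $F=K^{s/n}F_1^{1-s}$, and everything else is a fairly standard adaptation of the techniques used for Theorem~\ref{thm1}.
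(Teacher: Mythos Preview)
Your overall architecture matches the paper's: Gauss-map parametrization, $C^0$ bounds by the maximum principle on $\tilde u$, a Tso quotient for the upper curvature bound, then Krylov--Safonov/Evans--Krylov and bootstrap. Where you diverge from the paper is in the two substantive steps, and one of those divergences is a real gap.

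\textbf{Lower bound on principal curvatures.} You correctly isolate this as the new content and correctly attribute the rescue to the $K^{s/n}$ factor, but your proposed mechanism and test function are not the ones the paper uses. The paper (Remark~\ref{rem3.1}) does \emph{not} run a Chow/Andrews--Guan--Ni argument on $\log\tilde K$; it applies the maximum principle to $w=\log b^{11}-\varepsilon\log u$ with $\varepsilon>1$ and observes that if the largest principal radius $b^{11}$ blows up (i.e.\ $\lambda_1\to 0$), then $F=K^{s/n}F_1^{1-s}\to 0$ automatically, because $K$ contains the factor $\lambda_1$. Hence both $\Phi=u^{\beta+1}F^\beta$ and $\dot F^i\lambda_i^2\le \lambda_n F$ tend to zero, and the evolution inequality at the maximum becomes $0\le \partial_t w<1-\varepsilon<0$, a contradiction. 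So the point is not that degenerate-$K$ regions are ``damped,'' nor that the $F_1$ part ``absorbs residual terms''; it is that the speed itself vanishes wherever a curvature tends to zero, which neutralises every bad term in the $b^{11}$ evolution. Your sketch might be turned into a proof, but as written it misdescribes the mechanism.

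\textbf{Convergence.} Here your proposal has a genuine gap. Your linearization $\partial_t v=\frac{\beta}{n}\Delta_{\mathbb S^n}v$ is computed correctly, but linearization only gives exponential decay \emph{once the solution is already close to a sphere}. You have not supplied any argument that the rescaled flow ever enters such a neighbourhood; uniform $C^\infty$ bounds plus compactness give subsequential limits, but you have not shown those limits are spheres, nor produced a Lyapunov functional or monotone quantity that would force this. The paper avoids this difficulty entirely: instead of linearizing, it proves (Lemma~\ref{lem4.1} and Remark~\ref{rem4.1}) a \emph{global} exponential decay of the gradient quantity $Q=\tfrac12|D\log r|^2$ directly by the maximum principle, using only the $C^2$ estimates already in hand. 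This yields $|Dr|/r\le Ce^{-\gamma t}$ from any initial data, so $r$ converges exponentially to a constant, and interpolation against the uniform higher-order bounds upgrades this to $C^\infty$. If you want to keep the linearization route, you must first prove convergence (even without rate) by some independent argument; otherwise, the paper's direct gradient-decay estimate is both simpler and complete.
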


Now we define the rescaled immersion by
\begin{align*}
\tilde X(\cdot, \tau)=\varphi(t)X(\cdot, t),
\end{align*}
where
\begin{align*}
\tau=\begin{cases}t,~\quad\quad\quad\quad\quad\quad\quad\quad\alpha=\beta+1,
\\ \frac{\log\big(1+(\alpha-\beta-1)t\big)}{\alpha-\beta-1}, \quad\quad\alpha\neq \beta+1.\end{cases}
\end{align*}
After a standard computation, $\tilde X(\cdot, \tau)$ satisfies the following normalized flow
\begin{align}\label{1-2}
 \begin{cases}\frac{\partial}{\partial t}X(x,t)=-r^\alpha F^\beta\nu(x,t)+X(x, t),
 \\X(\cdot,0)=X_0(\cdot),\end{cases}
\end{align}
or
\begin{align}\label{1-2b}\tag{1.2$'$}
 \begin{cases}\frac{\partial}{\partial t}X(x,t)=-u^\alpha F^\beta\nu(x,t)+X(x, t),
 \\X(\cdot,0)=X_0(\cdot),\end{cases}
\end{align}
where we used $t$ instead of $\tau$ to denote the time variable and omit the `tilde'.
The study of the asymptotic behaviour of the flow \eqref{1-1} or \eqref{1-1b} is equivalent to the long time behaviour of the normalized flow \eqref{1-2} or \eqref{1-2b}.

When $\alpha<\beta+1$, we find that the hypersurfaces evolving by \eqref{1-1} may reach the origin in finite time, before the hypersurface shrinks to a point. Therefore the smooth convergence to a round point does not
hold in general.

\begin{thm}\label{thm3}
Suppose $\alpha<\beta+1$. There exists a smooth closed uniformly convex hypersurface $M_0$ such that under the flow \eqref{1-1}, for some $T>0$,
\begin{align*}
\lim_{t\rightarrow T}\mathcal R(X(x, t))=\lim_{t\rightarrow T}\frac{\max_{\mathbb S^n} r(\cdot, t)}{\min_{\mathbb S^n} r(\cdot, t)}=\infty.
\end{align*}
\end{thm}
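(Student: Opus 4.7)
The plan is to use the instability of the round-sphere equilibrium of the normalized flow \eqref{1-2} in the regime $\alpha<\beta+1$. I would linearize \eqref{1-2} around the unit sphere by writing the support function as $u=1+\phi$ and using $r\approx 1+\phi$ together with $F\approx 1-\phi-\tfrac{1}{n}\Delta_{\mathbb S^n}\phi$ (the latter from the Euler relation $\sum_i f_i=1$ at $\lambda=(1,\ldots,1)$ and the symmetry giving $f_i=1/n$), to obtain
\begin{align*}
    \partial_\tau\phi=(\beta+1-\alpha)\phi+\tfrac{\beta}{n}\Delta_{\mathbb S^n}\phi,
\end{align*}
whose eigenvalues on the $k$-th spherical-harmonic subspace are $\mu_k=(\beta+1-\alpha)-\tfrac{\beta}{n}k(k+n-1)$. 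Under $\alpha<\beta+1$, the radial eigenvalue $\mu_0=\beta+1-\alpha>0$ is unstable, and for $\alpha<1$ the translation eigenvalue $\mu_1=1-\alpha>0$ is also unstable.

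Next, I would take $M_0$ to be a ball of radius $\rho$ centered at $v$ with $0<|v|<\rho$, so $r_{\min}(0)=\rho-|v|$ and $r_{\max}(0)=\rho+|v|$. This datum excites the unstable translation mode; a direct computation at $t=0$ using the evolution equations
\begin{align*}
    \dot r_{\min}=-r_{\min}^\alpha F(p_{\min})^\beta,\qquad \dot r_{\max}=-r_{\max}^\alpha F(p_{\max})^\beta
\end{align*}
(valid at the extremal points because the outward unit normal is radial there by convexity) gives $\tfrac{d}{dt}\mathcal R(0)>0$ precisely when $\alpha<1$. I expect the growth of $\mathcal R$ to persist in the nonlinear regime, eventually driving $r_{\min}\to 0$ at some finite time $T>0$ while $r_{\max}(T)>0$; consequently $\mathcal R(t)\to\infty$ as $t\to T^-$. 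Since the rescaling $\tilde X=\varphi(t)X$ is spatially uniform, $\mathcal R$ is preserved in passing between the normalized and the original flow.

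The main obstacle is promoting the linear instability to a rigorous nonlinear finite-time blow-up statement for $\mathcal R$, and in particular extending the argument to the regime $1\leq\alpha<\beta+1$ where the translation mode is linearly stable. For that range, a direct geometric construction---for instance, taking $M_0$ so that near the closest point it osculates a small sphere $S_\epsilon(0)$ and near the farthest point it osculates a large sphere $S_L(0)$ with $0<\epsilon\ll L$---combined with comparison against the concentric-sphere self-similar solutions (whose finite-time extinctions satisfy $T_\epsilon=\epsilon^{\beta+1-\alpha}/(\beta+1-\alpha)<T_L$) can be used to force $r_{\min}\to 0$ strictly before $r_{\max}$, giving $\mathcal R\to\infty$ at $t=T_\epsilon$.
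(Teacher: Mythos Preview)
Your comparison argument at the end is the crux of the plan, and it does not work: the inequalities point the wrong way. At the spatial minimum of $r$ one has $Dr=0$ and $D^2r\geq 0$, so from \eqref{2-3} the principal curvatures satisfy $\lambda_i\leq 1/r_{\min}$, hence $F\leq 1/r_{\min}$ and
\[
\frac{d}{dt}r_{\min}\ \geq\ -\,r_{\min}^{\alpha}\,r_{\min}^{-\beta}\ =\ -\,r_{\min}^{\alpha-\beta}.
\]
Thus $r_{\min}$ shrinks \emph{more slowly} than the concentric sphere of the same radius, and comparison with $S_\epsilon(0)$ only yields the lower bound $r_{\min}(t)\geq\epsilon(t)$. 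Dually, at the maximum of $r$ one has $\lambda_i\geq 1/r_{\max}$, so $r_{\max}$ shrinks \emph{faster} than $S_L(0)$, giving only $r_{\max}(t)\leq L(t)$. Neither inequality forces $r_{\min}\to 0$ before $r_{\max}$; if anything, the naive maximum-principle bounds suggest $\mathcal R$ is controlled. Osculating a sphere at a single point does not help either, since second-order contact is not propagated by the flow without a barrier containing the whole hypersurface.

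The linearization portion is correct as a computation but, as you note, only identifies an unstable \emph{translation} mode when $\alpha<1$; the unstable radial mode $\mu_0$ is a pure dilation and is invisible in $\mathcal R$. So for the range $1\leq\alpha<\beta+1$ your proposal contains no mechanism at all, and even for $\alpha<1$ the passage from linear instability to finite-time blow-up of $\mathcal R$ is not supplied. The paper closes this gap by constructing an explicit \emph{non-spherical} sub-solution $\widehat M_t$ (a rotationally symmetric $C^{1,1}$ graph with a carefully tuned power profile near the origin) whose minimum distance to the origin tends to $0$ while it still encloses a fixed ball $B_1(z)$ away from the origin; one then traps a smooth uniformly convex $M_0$ between $\widehat M_\tau$ and $B_1(z)$ and invokes the comparison principle twice. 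The essential idea you are missing is precisely this barrier construction: concentric spheres cannot separate the behaviour of $r_{\min}$ from that of $r_{\max}$, so one must build a barrier that does.
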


The rest of the paper is organized as follows. First we recall some notations, known
results and evolution equations for some geometric quantities in Section 2. In Section 3, we established a priori estimates for higher order derivative of radial function, which ensure the longtime existence of the normalized flow \eqref{1-2} or \eqref{1-2b}. Section 4 is devoted to the proof of exponential convergence of normalized flow to a sphere centered at the origin in the $C^\infty$-topology. Finally in Section 5, we give the proof of Theorem
\ref{thm3}.

\vskip 0.5cm
\section{Notations and Preliminary Results}\label{Preliminaries}


Suppose $M$ is a hypersurface of $\mathbb{R}^{n+1}$,  let $A=\{h_{ij}\}$ and $\mathcal W=\{g^{ik}h_{kj}\}=\{h^i_j\}$ denote the second fundamental form and the Weingarten map respectively. The eigenvalues $\lambda_i$ with $\lambda_{\min}=\lambda_1\leq \cdots\leq \lambda_n=\lambda_{\max}$ of the second fundamental form  $A$ with respect to the metric $\{g_{ij}\}$
are called the principal curvatures of $X(M)$. Let the mean
curvature and the Gauss curvature are denoted by $H$ and $K$  respectively.


%
%

Let $M_t$ be smooth strictly convex hypersurface, then it can be parameterized as a graph  over the unit sphere, that is
$$X(x, t)=r(\xi(x, t), t)\xi(x, t),$$
where $\xi(\cdot, t):\mathbb S^n\rightarrow \mathbb S^n$ and $r(\cdot, t):\mathbb S^n\rightarrow \mathbb R_+$ is the radial function of $M_t$.  Let $e_1,\cdots, e_n $
be a smooth local orthonormal frame field on $\mathbb S^n$, and let $D$ be the covariant derivative on $\mathbb S^n$.
Then the induced metric $g_{ij}$, the inverse of the metric $g^{ij}$, the unit normal $\nu$, and the second fundamental form $h_{ij}$ can be
 written in terms of $r$ and whose spatial derivatives as follows:
\begin{align}\label{2-3}
 \begin{split}
g_{ij}&=r^2\delta_{ij}+r_ir_j,\\
g^{ij}&=r^{-2}\left(\delta_{ij}-\frac{r_ir_j}{r^2+|Dr|^2}\right),\\
\nu&=\frac{rz-Dr}{\sqrt{r^2+|Dr|^2}},\\
h_{ij}&=\frac{1}{\sqrt{r^2+|Dr|^2}}(-rr_{ij}+2r_ir_j+r^2\delta_{ij}),
 \end{split}
\end{align}
where $r_i=D_ir$ and $r_{ij}=D^2_{ij}r$.

By \eqref{2-3}, we can deduce that the normalized flow \eqref{1-2} or \eqref{1-2b} can be described by the following scalar equation for $r(\cdot, t)$
\begin{align}\label{2-4}
    \begin{cases}
    \frac{\partial r}{\partial t}(\xi, t)=-\sqrt{1+r^{-2}|Dr|^2}r^\alpha F^{\beta}(\xi,t)+r(\xi, t), \quad {\text on}~\mathbb S^n\times[0, \infty), \\r(\cdot,0)=r_0(\cdot),\end{cases}
\end{align}
or
\begin{equation}\label{2-4b}\tag{{2.2}$'$}
    \begin{cases}
    \frac{\partial r}{\partial t}(\xi, t)=-\sqrt{1+r^{-2}|Dr|^2}u^\alpha F^{\beta}(\xi,t)+r(\xi, t), \quad {\text on}~\mathbb S^n\times[0, \infty), \\r(\cdot,0)=r_0(\cdot),\end{cases}
\end{equation}
where $r_0$ is the support function of the initial hypersurface $M_0$.

If the hypersurface $M_t$ is uniformly convex,  then its support function is defined by
$$u(z, t)=\sup\{\langle x, z\rangle: x\in\Omega_t,~z\in \mathbb S^n\},$$
 where $\Omega_t$ is a convex body enclosed by $M_t$. Then hypersurface $M_t$ can be given by the embedding (ref. \cite{AMZ})
$$X(z, t)=u(z, t)z+ Du(z, t),$$
where $D$ is the gradient with respect to the standard  metric $\sigma_{ij}$ and connection on $\mathbb S^n$. The derivative of this map is given by
\begin{align*}
  \partial_iX=\tau_{ik}\sigma^{kl}\partial_lz,
\end{align*}
where $\tau_{ij}$ has the form
\begin{align}\label{2-6}
\tau_{ij}= D_i D_ju+\delta_{ij}u.
\end{align}
In particular the eigenvalues of $\tau_{ij}$ with respect to the
metric $\sigma_{ij}$ are the inverses of the principal curvatures, or the principal radii of curvatures.

Thus the solution of normalized flow \eqref{1-2} or \eqref{1-2b} is then given, up to a time dependent diffeomorphism, by solving the following scalar parabolic equation
\begin{align}\label{2-5}
    \begin{cases}
    \frac{\partial u}{\partial t}(x, t)=-r^\alpha F^{-\beta}_*(\tau_{ij})+u(x, t), \quad {\text on}~\mathbb S^n\times[0, \infty),
     \\u(\cdot,0)=u_0(\cdot),\end{cases}
\end{align}
or
\begin{align}\label{2-5b}\tag{2.4$'$}
    \begin{cases}
    \frac{\partial u}{\partial t}(x, t)=-u^\alpha F^{-\beta}_*(\tau_{ij})+u(x, t), \quad {\text on}~\mathbb S^n\times[0, \infty),
     \\u(\cdot,0)=u_0(\cdot),\end{cases}
\end{align}
where $u_0$ is the support function of the initial hypersurface $M_0$ and
$$r=\sqrt{u^2+|Du|^2}.$$

For a curvature function $F$ in Section 1, let $\dot{F}^{kl}$  denote the matrix of the first order partial derivatives with respect
to the components of its argument
$$\frac{d}{ds}F(A+sB)\Big|_{s=0}=\dot{F}^{kl}\Big|_A B_{kl}.$$
And the second order partial derivatives of $F$ are given by
$$\frac{d^2}{ds^2}F(A+sB)\Big|_{s=0}=\ddot{F}^{kl,rs}\Big|_A B_{kl}B_{rs}.$$
Similarly, the derivatives of $f$ with respect to $\lambda$ are denoted by
$$\dot f^i(\lambda)=\frac{\partial f}{\partial\lambda_i}(\lambda)\quad\text{ and}\quad\ddot f^{ij}(\lambda)=\frac{\partial^2 f}{\partial\lambda_i\partial\lambda_j}(\lambda).$$
In what follows, we will drop the arguments  when derivatives of $F$ and $f$ are evaluated at $\mathcal W$ or $\lambda(\mathcal W)$ respectively.
At any diagonal $A$ with distinct eigenvalues, the second derivative $\ddot F$ in direction $B\in{\rm Sym}(n)$ is given in terms of $\dot f$ and $\ddot f$ by (see \cite{A,A4}):
\begin{align}\label{2-1}
  \ddot F^{ij,kl}B_{ij}B_{kl}=\sum_{i,k}\ddot f^{ik}B_{ii}B_{kk}+2\sum_{i>k}\frac{\dot f^i-\dot f^k}{\lambda_i-\lambda_k}B_{ik}^2.
\end{align}

 The following evolution equations can be derived by computations as in Sect. 3 of \cite{H} (see also \cite{G2,LL,LL1,LSW}).
\begin{lem}\label{lem2-2}
Denote $\Phi=r^\alpha F^\beta$ or $u^\alpha F^\beta$, then under the normalized flow \eqref{1-2} or \eqref{1-2b}, we have the following evolution equations
\begin{align}
\partial_tg_{ij}&=-2\Phi h_{ij}+2g_{ij},\\
\partial_t \nu&=\nabla\Phi,
\end{align}
and
\begin{align}
\partial_th_{ij}&=\beta\Phi F^{-1}\dot F^{kl}h_{ij, kl}+\beta\Phi F^{-1}\ddot F^{kl, pq}h_{kl,i}h_{pq, j}+\beta\Phi F^{-1}\dot F^{kl}h_{kp}h^p_lh_{ij}\nonumber\\
&\quad+\beta\Phi F^{-1}\dot F^{kl}(h_{il}h_{kp}h^p_j-h_{ip}h^p_lh_{kj})-(\beta+1)\Phi h_{ip}h^p_j+h_{ij}\nn\\
&\quad+\beta(\beta-1)\Phi(\nabla_i\log F)(\nabla_j\log F)+2\alpha\beta\Phi(\nabla_i\log r)(\nabla_j\log F)\nn\\
&\quad+\alpha(\alpha-1)\Phi (\nabla_i\log r)(\nabla_j\log r)+\alpha\Phi\frac{1}{r}r_{ij},
\end{align}
or
\begin{align}
\partial_th_{ij}&=\beta\Phi F^{-1}\dot F^{kl}h_{ij, kl}+\beta\Phi F^{-1}\ddot F^{kl, pq}h_{kl,i}h_{pq, j}+\beta\Phi F^{-1}\dot F^{kl}h_{kp}h^p_lh_{ij}\nonumber\\
&\quad+\beta\Phi F^{-1}\dot F^{kl}(h_{il}h_{kp}h^p_j-h_{ip}h^p_lh_{kj})-(\beta+1)\Phi h_{ip}h^p_j+h_{ij}\nn\\
&\quad+\beta(\beta-1)\Phi(\nabla_i\log F)(\nabla_j\log F)+2\alpha\beta\Phi(\nabla_i\log u)(\nabla_j\log F)\nn\\
&\quad+\alpha(\alpha-1)\Phi (\nabla_i\log u)(\nabla_j\log u)+\alpha\Phi\frac{1}{u}u_{ij},\tag{2.8$'$}
\end{align}
when $\Phi=r^\alpha F^\beta$ or $u^\alpha F^\beta$, respectively.
\end{lem}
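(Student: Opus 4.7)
The plan is to derive the three evolution equations by writing the normalized velocity as a sum $\partial_t X = -\Phi\,\nu + X$ and exploiting the linearity of infinitesimal variations: each geometric quantity evolves as the sum of its evolutions under the contracting piece $\partial_t X = -\Phi\nu$ and the dilation piece $\partial_t X = X$, computed independently. The identities for $\partial_t g_{ij}$ and $\partial_t\nu$ then follow by direct differentiation of $g_{ij}=\langle\partial_iX,\partial_jX\rangle$, together with the Weingarten relation $\partial_i\nu=h^k_i\partial_kX$ and the constraints $|\nu|=1$, $\langle\nu,\partial_kX\rangle=0$. The contracting piece contributes $-2\Phi h_{ij}$ to $\partial_t g_{ij}$ and $\nabla\Phi$ to $\partial_t\nu$; the dilation piece contributes $+2g_{ij}$ to $\partial_t g_{ij}$ and nothing to $\partial_t\nu$, since $\partial_kX$ is tangent to $M_t$.

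For $\partial_t h_{ij}$ the contracting piece yields the classical Huisken-type identity $\partial_t h_{ij}=\nabla_i\nabla_j\Phi-\Phi h_{ik}h^k_j$, while the dilation piece contributes exactly $+h_{ij}$, as seen from $h_{ij}=\langle\partial_i\nu,\partial_jX\rangle$ together with $\partial_t\nu=0$ and $\partial_j(\partial_tX)=\partial_jX$. It then remains to expand $\nabla_i\nabla_j\Phi$ for $\Phi=r^\alpha F^\beta$. The logarithmic chain rule $\nabla_i\log\Phi=\alpha\nabla_i\log r+\beta\nabla_i\log F$, differentiated once more, produces the three quadratic pieces $\alpha(\alpha-1)\Phi\nabla_i\log r\nabla_j\log r$, $2\alpha\beta\Phi\nabla_i\log r\nabla_j\log F$ (understood as symmetrized in $i,j$), and $\beta(\beta-1)\Phi\nabla_i\log F\nabla_j\log F$, together with $\alpha\Phi r^{-1}r_{ij}$ and a residual $\beta\Phi F^{-1}\nabla_i\nabla_j F$.

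The residual term is reduced by first writing $\nabla_i\nabla_j F=\dot F^{kl}\nabla_i\nabla_j h_{kl}+\ddot F^{kl,pq}\nabla_i h_{kl}\nabla_j h_{pq}$, and then commuting $\nabla_i\nabla_j h_{kl}$ to $\nabla_k\nabla_l h_{ij}$ via two Codazzi swaps plus one Ricci commutator with the Gauss equation $R_{ikjl}=h_{ij}h_{kl}-h_{il}h_{kj}$. A short computation gives
\begin{equation*}
\nabla_i\nabla_j h_{kl}-\nabla_k\nabla_l h_{ij}=h_{ij}h_{kp}h^p_l+h_{il}h_{kp}h^p_j-h_{ip}h^p_l h_{kj}-h_{ip}h^p_j h_{kl}.
\end{equation*}
Contracting with $\dot F^{kl}$ and using the Euler identity $\dot F^{kl}h_{kl}=F$ (degree-one homogeneity) converts the last summand into $-Fh_{ip}h^p_j$. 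After multiplication by $\beta\Phi F^{-1}$ this combines with the $-\Phi h_{ik}h^k_j$ from the Huisken-type identity to give the announced coefficient $-(\beta+1)\Phi h_{ip}h^p_j$; the three remaining commutator summands supply the three $\dot F^{kl}$-expressions in the statement. The case $\Phi=u^\alpha F^\beta$ is identical up to replacing $r$ by $u$ throughout, since only the scalar factor $r^\alpha$ vs $u^\alpha$ enters the logarithmic chain rule.

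The main technical obstacle is the Simons-type commutation step: tracking the four Gauss-curvature corrections after the two Codazzi swaps, and recognizing that precisely one of them, via Euler's identity, combines cleanly with the classical $-\Phi h_{ik}h^k_j$ to produce the coefficient $-(\beta+1)$. Everything else is bookkeeping and a careful chain-rule expansion.
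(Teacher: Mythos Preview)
Your proposal is correct and follows exactly the standard derivation that the paper has in mind: the authors do not spell out a proof of Lemma~\ref{lem2-2} but simply write ``can be derived by computations as in Sect.~3 of \cite{H} (see also \cite{G2,LL,LL1,LSW}),'' and your outline---split the velocity into the normal piece $-\Phi\nu$ and the dilation $X$, use the Huisken identity $\partial_t h_{ij}=\nabla_i\nabla_j\Phi-\Phi h_{ik}h^k_j$, expand $\nabla_i\nabla_j\Phi$ via the logarithmic chain rule, and commute $\nabla_i\nabla_j h_{kl}$ to $\nabla_k\nabla_l h_{ij}$ via Codazzi and Gauss---is precisely that computation. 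Your identification of the key step (the Euler identity $\dot F^{kl}h_{kl}=F$ turning one of the four Gauss terms into $-F h_{ip}h^p_j$, which then merges with $-\Phi h_{ik}h^k_j$ to give the coefficient $-(\beta+1)$) is accurate.
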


\section{A Priori Estimates}
In this section, we establish the a priori estimates and show that the normalized flow exists for all times. We first derive the $C^0$-estimate for the radial function and the support function.
\begin{lem}\label{lem3-1}
Let $X(\cdot, t)$ be a smooth, uniformly convex solution to \eqref{1-2} or \eqref{1-2b} for $t\in[0, T)$.
If $\alpha\geq\beta+1$, then there exists a positive constant $C_1$, depending only on $\mathop{\max}\limits_{\mathbb S^n}r(\cdot,0)$ and $\mathop{\min}\limits_{\mathbb S^n}r(\cdot,0)$,  such that the radial function $r(\cdot, t)$ and the support function $u(\cdot,t)$ of $X(\cdot,t)$ satisfy
\begin{align*}
\frac{1}{C_1}\leq r(\cdot, t)\leq C_1, \quad\quad t\in[0, T).
\end{align*}
and
\begin{align*}
\frac{1}{C_1}\leq u(\cdot, t)\leq C_1, \quad\quad t\in[0, T).
\end{align*}
\end{lem}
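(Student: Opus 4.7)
The plan is to argue via the parabolic maximum principle applied to the scalar evolution equations \eqref{2-4}, \eqref{2-4b} for the radial function and \eqref{2-5}, \eqref{2-5b} for the support function, and then transfer bounds between $r$ and $u$ using elementary convex geometry. The key observation is that at a spatial extremum of $r$ (resp.\ $u$) the gradient term in the flow vanishes and the Weingarten map (resp.\ its inverse) is controlled from one side by $r^{-1}I$ (resp.\ $uI$), so that monotonicity and $1$-homogeneity of $F$ force $F^\beta$ to match the algebraic form needed for the hypothesis $\alpha\geq\beta+1$ to close the argument.

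First I would handle the flow \eqref{1-2}. Let $r_{\max}(t)=\max_{\mathbb S^n}r(\cdot,t)$. At a maximum point one has $Dr=0$, so $\sqrt{1+r^{-2}|Dr|^2}=1$, and from \eqref{2-3} the Weingarten tensor becomes $h^i_j=r^{-1}\delta^i_j-r^{-2}r_{ij}$, where $D^2r\leq 0$. Hence every principal curvature satisfies $\lambda_i\geq r^{-1}$, and by Condition \ref{con-1}(ii)--(iv), $F\geq F(r^{-1},\dots,r^{-1})=r^{-1}$. Plugging into \eqref{2-4} yields the differential inequality
\begin{equation*}
\frac{d}{dt}r_{\max}\;\leq\;-r_{\max}^{\,\alpha-\beta}+r_{\max}.
\end{equation*}
When $\alpha\geq\beta+1$, the right-hand side is $\leq 0$ whenever $r_{\max}\geq 1$, so $r_{\max}(t)\leq\max\{r_{\max}(0),1\}$. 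Symmetrically, at a minimum $D^2r\geq 0$ gives $\lambda_i\leq r^{-1}$ and $F\leq r^{-1}$, so
\begin{equation*}
\frac{d}{dt}r_{\min}\;\geq\;-r_{\min}^{\,\alpha-\beta}+r_{\min}\;\geq\;0\quad\text{whenever }r_{\min}\leq 1,
\end{equation*}
producing $r_{\min}(t)\geq\min\{r_{\min}(0),1\}$. Thus $r$ is controlled above and below. To recover the $u$-bounds under \eqref{1-2}, I would use that $\Omega_t$ is convex with the origin in its interior: the inclusion $\{r(\cdot,t)\geq c\}$ forces $B_c(0)\subset\Omega_t$ (by convexity in the direction of any point of norm $\leq c$), which in turn gives $u(\cdot,t)\geq c$; conversely $r(\cdot,t)\leq C$ immediately gives $\Omega_t\subset B_C(0)$ and hence $u(\cdot,t)\leq C$.

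For the flow \eqref{1-2b}, I would run the analogous argument directly on \eqref{2-5b}. At a maximum of $u$, $Du=0$ and $D^2u\leq 0$, so by \eqref{2-6} the eigenvalues of $\tau_{ij}$ are $\leq u$, i.e.\ the principal curvatures satisfy $\lambda_i\geq u^{-1}$, whence $F_*^{-\beta}(\tau)=F^\beta(\lambda)\geq u^{-\beta}$. This produces
\begin{equation*}
\frac{d}{dt}u_{\max}\;\leq\;-u_{\max}^{\,\alpha-\beta}+u_{\max},
\end{equation*}
so $u_{\max}(t)\leq\max\{u_{\max}(0),1\}$. The minimum of $u$ is treated identically with the opposite inequalities, using $\alpha\geq\beta+1$. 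The transfer back to $r$ is again pure convex geometry: $u\leq C$ implies $\Omega_t\subset B_C(0)$ hence $r\leq C$, while $u\geq c$ implies $B_c(0)\subset\Omega_t=\bigcap_z\{y:\langle y,z\rangle\leq u(z)\}$ hence $r\geq c$.

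The step I expect to require the most care is the computation of the Weingarten/inverse-Weingarten bound at the extrema: one must verify from the explicit formulas \eqref{2-3}, \eqref{2-6} that $Dr=0$ (resp.\ $Du=0$) together with the sign of $D^2r$ (resp.\ $D^2u$) really yields a two-sided comparison of the principal curvatures with $r^{-1}$ (resp.\ $u^{-1}$), so that monotonicity plus $1$-homogeneity of $F$ gives matching upper and lower bounds on $F^\beta$. Everything else reduces to a one-variable ODE comparison, and the exponent condition $\alpha\geq\beta+1$ is used precisely to make $-s^{\alpha-\beta}+s$ have the correct sign on each side of $s=1$.
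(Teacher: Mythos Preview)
Your proposal is correct and follows essentially the same maximum-principle argument as the paper: bound the principal curvatures at a spatial extremum of $r$ (or $u$) via the sign of the Hessian, invoke monotonicity and $1$-homogeneity of $F$ to control $F^\beta$, and reduce to the one-variable differential inequality $\frac{d}{dt}r_{\max}\leq r_{\max}(1-r_{\max}^{\alpha-\beta-1})$ where $\alpha\geq\beta+1$ forces the right sign. The only cosmetic difference is that the paper treats \eqref{1-2b} through the $r$-equation \eqref{2-4b} together with the elementary identity $\max_{\mathbb S^n}r=\max_{\mathbb S^n}u$, $\min_{\mathbb S^n}r=\min_{\mathbb S^n}u$ (from $r^2=u^2+|Du|^2$), while you run the argument directly on the $u$-equation \eqref{2-5b} and transfer via convex inclusions; both routes are equally short.
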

\begin{proof}
First we prove the bounds for radial function $r(\cdot, t)$. Let
$$ r_{\max}(t)=\mathop{\max}\limits_{\mathbb S^n}r(\cdot, t)\quad\text{ and}\quad
 r_{\min}(t)=\mathop{\min}\limits_{\mathbb S^n}r(\cdot, t).$$
By \eqref{2-3}, at the point where radial function $r$ attains its spatial maximum, we have the principal curvatures $\lambda_i~(i=1,\cdots, n)$ of hypersurface $M_t$ satisfy $$\lambda_i\geq \frac{1}{r_{\max}},\quad\quad i\in\{1, \cdots, n\},$$
that is
$$F\geq \frac{1}{r_{\max}}.$$
From
\begin{align}\label{3-2}
r=\sqrt{u^2+|Du|^2},
\end{align}
it follows that
\begin{align}\label{3-20}
\mathop{\max}\limits_{\mathbb S^n}r(\cdot, t)=\mathop{\max}\limits_{\mathbb S^n}u(\cdot, t)\quad\text{ and}\quad \mathop{\min}\limits_{\mathbb S^n}r(\cdot, t)=\mathop{\min}\limits_{\mathbb S^n}u(\cdot, t).
\end{align}
Thus, by \eqref{2-4} or \eqref{2-4b}, we have
\begin{align}\label{3-1}
\frac{d}{dt}r_{max}\leq r_{\max}(1-r_{\max}^{\alpha-\beta-1}).
\end{align}

When $\alpha=\beta+1$, by \eqref{3-1} we have $\frac{d}{dt}r_{max}\leq0$, therefore $r(\cdot, t)\leq \mathop{\max}\limits_{\mathbb S^n}r(\cdot, 0)$.

When $\alpha>\beta+1$, we can assume that $r_{\max}>1$, otherwise the conclusion holds. Hence \eqref{3-1} gives $\frac{d}{dt}r_{max}\leq0$, which implies
$$r(\cdot, t)\leq \max\{1,~\mathop{\max}\limits_{\mathbb S^n}r(\cdot, 0)\}.$$

From above arguments, we obtain the upper bound for $r(\cdot, t)$. The lower bound for $r(\cdot, t)$ follows similarly.

The bounds for support function $u$ can be obtained easily by equations \eqref{3-2}, \eqref{3-20} and the bounds for radial function $r$.
\end{proof}

The $C^1$-estimates for the radial function and the support function are the byproducts of $C^0$-estimate.
\begin{lem}\label{lem3-2}
Let $X(\cdot, t)$ be a smooth, uniformly convex solution to \eqref{1-2} or \eqref{1-2b} for $t\in[0, T)$. Then there exists a positive constant $C_2$, depending only on $\mathop{\max}\limits_{\mathbb S^n\times[0, T)}r$ and $\mathop{\min}\limits_{\mathbb S^n\times[0, T)}r$,  such that
\begin{align*}
|Dr(\cdot, t)|\leq C_2,
\end{align*}
and
\begin{align}\label{3-5}
|Du(\cdot, t)|\leq \mathop{\max}\limits_{\mathbb S^n\times[0, T)}r
\end{align}
for all $t<T$.
\end{lem}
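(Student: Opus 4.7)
The plan is that, unlike most a priori estimates in curvature flow, this lemma is not dynamical at all: once the $C^0$ bounds of Lemma \ref{lem3-1} are in hand, the $C^1$ bounds are purely geometric consequences of the relationship between $r$, $u$, and their spherical gradients for a convex body containing the origin. In particular nothing has to be differentiated along the flow.

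First I would dispatch the bound on $|Du|$. Squaring the identity $r^2 = u^2 + |Du|^2$ from \eqref{3-2} immediately gives $|Du|^2 = r^2 - u^2 \le r^2$, so $|Du(\cdot,t)| \le \max_{\mathbb S^n} r(\cdot,t) \le \max_{\mathbb S^n\times[0,T)} r$, which is precisely \eqref{3-5}.

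For $|Dr|$ I would use the dual relationship coming from \eqref{2-3}. The unit outer normal is
\begin{equation*}
\nu = \frac{r\xi - Dr}{\sqrt{r^2+|Dr|^2}},
\end{equation*}
so the support function satisfies
\begin{equation*}
u = \langle X,\nu\rangle = r\langle \xi,\nu\rangle = \frac{r^2}{\sqrt{r^2+|Dr|^2}}.
\end{equation*}
Rearranging,
\begin{equation*}
\sqrt{r^2+|Dr|^2} = \frac{r^2}{u}.
\end{equation*}
Since by Lemma \ref{lem3-1} (together with \eqref{3-20}, which identifies the extrema of $r$ and $u$) both $r$ and $u$ are pinched between $C_1^{-1}$ and $C_1$ uniformly on $\mathbb S^n\times[0,T)$, we conclude
\begin{equation*}
|Dr|^2 \le \frac{r^4}{u^2} \le \frac{(\max r)^4}{(\min r)^2},
\end{equation*}
which yields the desired bound $|Dr(\cdot,t)|\le C_2$ with $C_2$ depending only on $\max_{\mathbb S^n\times[0,T)} r$ and $\min_{\mathbb S^n\times[0,T)} r$.

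There is essentially no obstacle here: the only subtlety is remembering that the $C^0$ lower bound on $u$ is the same as the $C^0$ lower bound on $r$ by \eqref{3-20}, so one does not need an independent positive lower bound on the support function. The $C^1$ estimates are therefore genuine byproducts of the $C^0$ estimates, as the lemma asserts, with no maximum principle argument or evolution equation required.
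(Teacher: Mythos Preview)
Your proof is correct and follows essentially the same approach as the paper: both use the identity $u = r^2/\sqrt{r^2+|Dr|^2}$ (from \eqref{2-3}) to bound $|Dr|$ in terms of $\max r$ and $\min r$, and the relation $r^2 = u^2 + |Du|^2$ from \eqref{3-2} to bound $|Du|$ by $\max r$. The only cosmetic differences are the order of the two estimates and your more explicit derivation of the support-function identity from the normal vector formula.
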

\begin{proof}
Let $\xi=\frac{X(x, t)}{|X(x, t)|}$. Since $M_t$ are uniformly convex, $X(x, t)=r(\xi, t)\xi$. By \eqref{2-3}, we have the support function and radial function satisfy
\begin{align*}
u=\frac{r^2}{\sqrt{r^2+|Dr|^2}}.
\end{align*}
Hence,
$$|Dr(\xi, t)|\leq \frac{r^2}{u}\leq \frac{\max_{\mathbb S^n\times[0, T)}r^2}{\min_{\mathbb S^n\times[0, T)}r}.$$
Therefore, the upper bound for $|Dr|$ is obtained.

The estimate for $|Du|$ follows from equation \eqref{3-2}.
\end{proof}

Next, we establish the upper bound for the principal curvatures. For this purpose, the upper bound for $F$ is needed.
\begin{lem}\label{lem3-4}
Let $X(x, t)$ be a smooth, closed, uniformly convex solution to the normalized
flow \eqref{1-2} or \eqref{1-2b} for $t\in[0, T)$, which encloses the origin. Then there exists a positive constant $C_3$, depending only on $\alpha,\beta$, $\mathop{\max}\limits_{\mathbb S^n\times[0, T)}r$ and $\mathop{\min}\limits_{\mathbb S^n\times[0, T)}r$, such that
$$F(\cdot, t)\leq C_3, \quad\quad t\in[0, T).$$
\end{lem}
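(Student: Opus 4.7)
The plan is a parabolic maximum principle argument applied to the auxiliary quotient $W := \Phi/(u-\epsilon)$, where $\Phi = u^\alpha F^\beta$ for \eqref{1-2b} (respectively $r^\alpha F^\beta$ for \eqref{1-2}) and $\epsilon := 1/(2C_1)$ is chosen small enough, by Lemma \ref{lem3-1}, that $u-\epsilon\geq 1/(2C_1)>0$ uniformly. Since $u$ and $r$ are already controlled above and below by Lemmas \ref{lem3-1} and \ref{lem3-2}, a uniform upper bound on $W$ is equivalent to the desired upper bound on $F$. I would work in the Gauss map parametrization, where the principal radii are $\tau_{ij}=D_iD_j u+\delta_{ij}u$, $F=F_*(\tau)^{-1}$, and the support function satisfies $\partial_t u = u-\Phi$ under both normalized flows.

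Combining $\partial_t\tau_{ij}=\tau_{ij}-(D_iD_j\Phi+\delta_{ij}\Phi)$ with homogeneity of $F_*$ gives
\[
\partial_t\Phi=(\alpha-\beta)\Phi-\alpha\Phi^2/u+\beta\Phi F\,\dot F_*^{ij}\bigl(D_iD_j\Phi+\delta_{ij}\Phi\bigr),
\]
whence $\partial_t W=(u-\epsilon)^{-1}\partial_t\Phi-Wu/(u-\epsilon)+W^2$. At a spatial maximum $(z_0,t_0)$ of $W$ with $t_0>0$, the conditions $\nabla W=0$ and $D^2 W\leq 0$ produce $\nabla\Phi=W\nabla u$, and after the cross terms $\nabla u\otimes\nabla u$ cancel at the critical point, $D_iD_j\Phi\leq W\,D_iD_j u$. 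Using $\Phi=W(u-\epsilon)$ this upgrades to the matrix inequality $D_iD_j\Phi+\delta_{ij}\Phi\leq W\tau_{ij}-\epsilon W\delta_{ij}$.

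Now inverse concavity of $F$ (Condition \ref{con-1}(v)) enters decisively: homogeneity of $F_*$ gives $\dot F_*^{ij}\tau_{ij}=F_*$, while concavity of $f_*$ applied between $\lambda$ and $(1,\ldots,1)$, together with $f_*(\mathbf{1})=1$, yields the trace bound $\sum_i\dot f_*^i\geq 1$. Since $FF_*=1$, these facts combine to give $\dot F_*^{ij}(D_iD_j\Phi+\delta_{ij}\Phi)\leq W/F-\epsilon W$. Substituting this into $\partial_t W\geq 0$ at the maximum and discarding terms of favorable sign produces
\[
\beta\epsilon F\,W^2\;\leq\;(\beta+1)W^2+(\alpha-\beta)W,
\]
which bounds $F(z_0,t_0)$ by a constant depending only on $\alpha$, $\beta$, and $C_1$, and hence bounds $W_{\max}$ by the same type of constant. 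For flow \eqref{1-2} the only modification is that $\Phi$ depends on $r=\sqrt{u^2+|Du|^2}$; using Lemma \ref{lem3-2}, the additional drift terms produced by $\partial_t r$ at the critical point are either negative or of strictly lower order in $W$, and are absorbed into the same inequality.

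The main difficulty is the final step: one must verify that the negative term $-\beta\epsilon F W^2$, which traces back to the $-\epsilon W\delta_{ij}$ perturbation of $\tau_{ij}$ and thus to inverse concavity via $\sum_i\dot f_*^i\geq 1$, genuinely dominates the competing positive contribution $(\beta+1)W^2$ as $W\to\infty$. This is precisely where Condition \ref{con-1}(v) and the positive lower bound on $u$ already proved in Lemma \ref{lem3-1} are both essential; if either is dropped, the closing inequality breaks.
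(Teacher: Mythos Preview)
Your proposal is correct and follows essentially the same Tso-type argument as the paper. The paper's test function is $G=-u_t/(u-\eta)=(\Phi-u)/(u-\eta)$ with $\eta=\tfrac12\min u$, which differs from your $W=\Phi/(u-\epsilon)$ only by the bounded quantity $u/(u-\epsilon)$; both arguments then use the second-derivative condition to get $D_iD_j\Phi+\delta_{ij}\Phi\le W\tau_{ij}-\epsilon W\delta_{ij}$, the identities $\dot F_*^{ij}\tau_{ij}=F_*$ and $\sum_i\dot f_*^i\ge1$ from inverse concavity, and conclude by observing that the resulting negative term of order $F\,W^2\sim W^{2+1/\beta}$ dominates the positive $W^2$ terms at a large maximum.
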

\begin{proof}
Let $\eta=\frac{1}{2}\mathop{\min}\limits_{\mathbb S^n\times[0, T)}u$, and consider the function
\begin{align*}
G(x, t)=\frac{-u_t}{u-\eta}=\frac{r^\alpha F_*^{-\beta}-u}{u-\eta}.
\end{align*}
Suppose $G(x_0, t_0)=\mathop{\min}\limits_{\mathbb S^n\times[0, T)}G(x, t).$ Then at point $(x_0, t_0)$, there holds
\begin{align}\label{3-6}
0=D_iG=\frac{-u_{ti}}{u-\eta}+\frac{u_tu_i}{(u-\eta)^2},
\end{align}
and
\begin{align}\label{3-7}
0\geq D^2_{ij}G&=\frac{-u_{tij}}{u-\eta}+\frac{2u_{ti}u_j+u_tu_{ij}}{(u-\eta)^2}
-\frac{2u_tu_iu_j}{(u-\eta)^3}\nn\\
&=\frac{-u_{tij}}{u-\eta}+\frac{u_tu_{ij}}{(u-\eta)^2}.
\end{align}
It follows from \eqref{2-6} and \eqref{3-7} that
\begin{align}\label{3-8}
-u_{tij}-u_t\delta_{ij}\leq Gu_{ij}-u_t\delta_{ij}=(\tau_{ij}-\eta\delta_{ij})G.
\end{align}

\textbf{Case I}: $\Phi=r^\alpha F^\beta$. By \eqref{3-2} and \eqref{3-6}, we have
\begin{align}\label{3-9}
\frac{\partial r}{\partial t}=\frac{uu_t+u_ku_{kt}}{r}=\frac{\eta u-r^2}{r}G.
\end{align}
Notice that  $\dot F^{ij}_*\tau_{ij}=F_*$, combining \eqref{2-5}, \eqref{3-8} with \eqref{3-9} gives
\begin{align}\label{3-10}
\partial_t G&=\frac{-u_{tt}}{u-\eta}+G^2=\frac{(r^\alpha F^{-\beta}_*(\tau_{ij})-u)_t}{u-\eta}+G^2\nn\\
&=\frac{\alpha r^{\alpha-1}F^{-\beta}_*r_t-u_t}{u-\eta}-\frac{\beta r^\alpha F^{-\beta-1}_*\dot F^{ij}_*\tau_{ijt}}{u-\eta}+G^2\nn\\
&=\frac{\alpha r^{\alpha-1}F^{-\beta}_*}{u-\eta}\cdot\frac{u\eta-r^2}{r}G+\frac{\beta r^\alpha F^{-\beta-1}_*\dot F^{ij}_*(-u_{ijt}-u_t\delta_{ij})}{u-\eta}+G+G^2\nn\\
&\leq \frac{\beta r^\alpha F^{-\beta-1}_*\dot F^{ij}_*(G\tau_{ij}-G\eta\delta_{ij})}{u-\eta}+C(G+G^2)\nn\\
&= \frac{\beta r^\alpha F^{-\beta}_*G}{u-\eta}(1-\eta\frac{1}{F_*}\dot F^{ij}_*\delta_{ij})+C(G+G^2)\nn\\
&\leq \frac{\beta r^\alpha F^{\beta}G}{u-\eta}(1-\eta F)+C(G+G^2),
\end{align}
where $\dot F^{ij}_*\delta_{ij}\geq 1$ (\cite{U}) was used in the last inequality.

\textbf{Case II}: $\Phi=u^\alpha F^\beta$.
By $\dot F^{ij}_*\tau_{ij}=F_*$,  \eqref{2-5b} and \eqref{3-8}, we also have
\begin{align}\label{3-10b}
\partial_t G&=\frac{-u_{tt}}{u-\eta}+G^2=\frac{(u^\alpha F^{-\beta}_*(\tau_{ij})-u)_t}{u-\eta}+G^2\nn\\
&=-\alpha u^{\alpha-1}F^{-\beta}_*G+\frac{\beta u^\alpha F^{-\beta-1}_*\dot F^{ij}_*(-u_{ijt}-u_t\delta_{ij})}{u-\eta}+G+G^2\nn\\
&\leq \frac{\beta u^\alpha F^{\beta}G}{u-\eta}(1-\eta F)+C(G+G^2).
\end{align}

Without loss of generality, we assume that $F^\beta\thickapprox G\gg 1$. Thus \eqref{3-10} or \eqref{3-10b} can be rewrite as
$$\partial_t G\leq \Lambda_1 G^2(\Lambda_2-\eta G^{\frac{1}{\beta}}),$$
where $\Lambda _1$ and $\Lambda_2$ only depending on $\alpha, \beta$, $\mathop{\max}\limits_{\mathbb S^n\times[0, T)}r$ and $\mathop{\min}\limits_{\mathbb S^n\times[0, T)}r$.
Therefore $G\leq C$. As a conclusion, the upper  bound for $F$ follows.
\end{proof}

As a byproduct of above lemma, the upper bounds for the principal curvatures are obtained.
\begin{cor}\label{cor-1}
Let $X(x, t)$ be a smooth, closed and uniformly convex solution to the normalized flow \eqref{1-2} or \eqref{1-2b} for $t\in[0, T)$, which encloses the origin.
Then there exists a positive constant $C$, depending only on $\alpha, \beta, M_0$, $\mathop{\max}\limits_{\mathbb S^n\times[0, T)}r$ and $\mathop{\min}\limits_{\mathbb S^n\times[0, T)}r$, such that the principal curvatures of $X(\cdot, t)$ are bounded from  above, i.e.,
$$\lambda_i(\cdot, t)\leq C$$
for all $t\in[0, T)$ and $i=1,\ldots, n.$
\end{cor}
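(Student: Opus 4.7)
The plan is to obtain the upper bound on the principal curvatures by a parabolic maximum principle argument applied to the largest principal curvature $\lambda_{\max}(\cdot,t)$. Let $(x_0,t_0)$ be a point at which $\lambda_{\max}$ attains a new spatial maximum, and work in principal coordinates at $(x_0,t_0)$ with $h^i_j$ diagonal and $h^1_1=\lambda_{\max}$. Since $\dot F^{kl}$ is positive definite and the spatial Hessian of $h^1_1$ is non-positive at the maximum, the diffusion term $\beta\Phi F^{-1}\dot F^{kl}(h^1_1)_{,kl}$ in the evolution equation of Lemma \ref{lem2-2} is non-positive, so only the reaction and first-order terms need to be estimated.

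The dominant negative contribution is $-(\beta+1)\Phi\lambda_{\max}^2$. The problematic positive contributions are the second-variation term $\beta\Phi F^{-1}\ddot F^{kl,pq}h_{kl,1}h_{pq,1}$ (which has no definite sign) together with the gradient-squared terms $\beta(\beta-1)\Phi(\nabla_1\log F)^2$, $2\alpha\beta\Phi(\nabla_1\log r)(\nabla_1\log F)$, and $\alpha(\alpha-1)\Phi(\nabla_1\log r)^2$ arising from the $r^\alpha F^\beta$ (or $u^\alpha F^\beta$) structure of the speed. I would dispose of these using the standard inverse-concavity inequality
\begin{align*}
\ddot F^{kl,pq}B_{kl}B_{pq}+2F^{-1}(\dot F^{kl}B_{kl})^2\geq 2\dot F^{kp}(h^{-1})^{lq}B_{kl}B_{pq},
\end{align*}
applied with $B_{kl}=h_{kl,1}$, together with the Codazzi identity $h_{kl,1}=h_{1l,k}$. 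The surviving term $\dot F^{kp}(h^{-1})^{lq}h_{1l,k}h_{1l,p}$ has a favorable sign, and the $(\nabla_1\log r)^2$ (respectively $(\nabla_1\log u)^2$) contribution is controlled by the $C^1$ estimate of Lemma \ref{lem3-2}. Combining these manipulations with the bound $F\leq C_3$ from Lemma \ref{lem3-4} and the $C^0$ pinching of $r,u$ from Lemma \ref{lem3-1} reduces the evolution at $(x_0,t_0)$ to a differential inequality of the form
\begin{align*}
0\leq -c\,\Phi\lambda_{\max}^2+C(1+\lambda_{\max}),
\end{align*}
forcing $\lambda_{\max}(x_0,t_0)\leq C$.

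The main obstacle is twofold. First, careful bookkeeping is required to check that, after absorbing the $\ddot F$ and $(\nabla\log F)^2$ terms via the inverse-concavity identity, the net coefficient in front of $\Phi\lambda_{\max}^2$ is strictly negative. Second, one must exhibit a positive lower bound on $\Phi$ itself at $(x_0,t_0)$, since otherwise the good reaction degenerates; here I would use condition (vi) together with the identity $F(\lambda)=F_{*}(1/\lambda_1,\ldots,1/\lambda_n)^{-1}$, which implies that a large $\lambda_{\max}$ drives the argument of $F_{*}$ toward $\partial\Gamma_+$, hence $F$ large at the critical point, and therefore $\Phi=r^\alpha F^\beta$ bounded below there by Lemma \ref{lem3-1}. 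Should the direct computation become unwieldy, a Tso-type argument on the auxiliary quantity $Q=\lambda_{\max}/(u-\eta)$ with $\eta=\tfrac12\min_{\mathbb S^n\times[0,T)} u$, in the spirit of the proof of Lemma \ref{lem3-4}, would provide a clean alternative route.
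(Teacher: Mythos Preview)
Your approach is not wrong in spirit, but it is vastly more complicated than what is actually needed, and you have overlooked a two-line argument that you essentially already have in hand. The paper does not touch the evolution equation of $h_{ij}$ at all here. Instead, Lemma~\ref{lem3-4} gives $F\leq C_3$, hence $F_*(\lambda^{-1}_1,\dots,\lambda^{-1}_n)=F^{-1}\geq C_3^{-1}$. Condition~\ref{con-1}(vi) says $F_*$ vanishes on $\partial\Gamma_+$, so a uniform positive lower bound on $F_*$ forces the argument $(\lambda_1^{-1},\dots,\lambda_n^{-1})$ to stay in a compact subset of $\Gamma_+$; in particular $\lambda_i^{-1}\geq c>0$ for every $i$, i.e.\ $\lambda_i\leq C$. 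That is the entire proof.

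What is striking is that you invoke exactly this mechanism in your ``second obstacle'' paragraph---using (vi) and $F=F_*^{-1}$ to argue that large $\lambda_{\max}$ forces $F$ large---but only as an auxiliary step to feed a lower bound on $\Phi$ back into a long maximum-principle computation. The observation already \emph{is} the corollary: once $F$ is bounded above, condition~(vi) converts that bound directly into an upper bound on every $\lambda_i$, with no evolution equations, no inverse-concavity identity, no Codazzi, and no Tso trick required. Your route may be salvageable (the bookkeeping on the $(\nabla_1\log F)^2$ coefficient after applying the inverse-concavity inequality is delicate, and you have not accounted for the positive reaction term $\beta\Phi F^{-1}\dot F^{kl}h_{kp}h^p_l\,h_{11}$, which eats most of the $-(\beta+1)\Phi\lambda_{\max}^2$ you call ``dominant''), but it buys nothing: all of that machinery is precisely what Lemma~\ref{lem3-4} and condition~(vi) were set up to avoid.
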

\begin{proof}
By Lemma \ref{lem3-4}, we have the dual function satisfies $F_*\geq C$. Since $F_*$ approaches zero on the boundary of positive cone $\Gamma_+$, there exists $c>0$ such that $\frac{1}{\lambda_i}\geq c$, which implies $\lambda_i\leq C$.
\end{proof}

Next, we prove the lower bound for the principal curvatures.
Before we start, let us recall the following lemma, which describes hypersurfaces $M_t$ are ``uniformly star-shaped".
\begin{lem}[\cite{LSW}]\label{lem3-5}
Let $X(x, t)$ be a smooth, uniformly convex hypersurface which
solves the normalized flow \eqref{1-2}, and encloses the origin. Then for
any $t\in[0, T)$ and $p\in M_t$, and any unit tangential vector $e(p)\in T_pM_t$, we have
\begin{align*}
\left\langle e(p), \frac{p}{|p|}\right\rangle^2\leq 1-\delta_0,
\end{align*}
where $\delta_0>0$ is a small constant only depending on $\alpha, \beta$, $\mathop{\min}\limits_{\mathbb S^n\times[0, T)}r$ and $\mathop{\max}\limits_{\mathbb S^n\times[0, T)}r$.
\end{lem}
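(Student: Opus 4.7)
The plan is to reduce the assertion to a uniform positive lower bound on the ratio $u/r$ of the support function to the radial function, and then to read this off directly from the $C^0$- and $C^1$-estimates already established in Lemmas \ref{lem3-1} and \ref{lem3-2}. The geometric content is that the angle between the outer normal $\nu$ and the radial direction $p/|p|$ stays bounded away from $\pi/2$, which is the usual notion of being ``uniformly star-shaped'' with respect to the origin.

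First I would decompose the position vector at $p\in M_t$ as $X = u\nu + X^T$, where $X^T\in T_pM_t$, and use $|X|^2=u^2+|X^T|^2=r^2$ to obtain $|X^T|^2/|X|^2 = 1-(u/r)^2$. Since the projection of $p/|p|$ onto $T_pM_t$ has length $|X^T|/|X|$, the Cauchy--Schwarz inequality gives, for any unit tangent vector $e(p)$,
$$\left\langle e(p),\frac{p}{|p|}\right\rangle^{2} \;\le\; \frac{|X^T|^{2}}{|X|^{2}} \;=\; 1 - \left(\frac{u}{r}\right)^{2}.$$

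Next I would bound $u/r$ from below. From \eqref{2-3} one has $u=r^{2}/\sqrt{r^{2}+|Dr|^{2}}$, hence
$$\frac{u}{r} \;=\; \frac{1}{\sqrt{1+|Dr|^{2}/r^{2}}}.$$
Lemma \ref{lem3-1} provides $r\ge 1/C_{1}$ and Lemma \ref{lem3-2} provides $|Dr|\le C_{2}$, where both constants depend only on $\alpha,\beta$ and on $\min_{\mathbb{S}^{n}\times[0,T)}r$, $\max_{\mathbb{S}^{n}\times[0,T)}r$. Consequently $|Dr|^{2}/r^{2}\le C_{1}^{2}C_{2}^{2}$ and therefore
$$\left(\frac{u}{r}\right)^{2} \;\ge\; \frac{1}{1+C_{1}^{2}C_{2}^{2}} \;=:\; \delta_{0} \;>\; 0.$$
Combining the two displays yields $\langle e(p),p/|p|\rangle^{2}\le 1-\delta_{0}$, which is the claim.

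The argument is essentially a packaging of the prior estimates, so I do not anticipate a genuine obstacle; the only point that requires care is tracing the dependence of $\delta_0$ through the constants $C_1,C_2$, to make sure it depends only on the quantities listed in the statement (namely $\alpha,\beta$ and the $C^{0}$-bounds on $r$), which is transparent from the proofs of Lemmas \ref{lem3-1} and \ref{lem3-2}. Note that the identical argument applies verbatim to \eqref{1-2b}, since the $C^{0}$- and $C^{1}$-bounds in Lemmas \ref{lem3-1}--\ref{lem3-2} cover both flows.
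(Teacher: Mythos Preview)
The paper does not give its own proof of Lemma~\ref{lem3-5}; it is quoted from \cite{LSW}. Your argument is correct and is the standard one: write $X=u\nu+X^T$, observe that $\langle e,p/|p|\rangle^2\le |X^T|^2/|X|^2=1-(u/r)^2$ for any unit tangent vector $e$, and then bound $u/r$ from below using the $C^0$- and $C^1$-estimates.

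One small point of bookkeeping: you invoke Lemma~\ref{lem3-1} to obtain $r\ge 1/C_1$, but Lemma~\ref{lem3-1} carries the hypothesis $\alpha\ge\beta+1$, which does not appear in the statement of Lemma~\ref{lem3-5}. This is harmless, since the conclusion of Lemma~\ref{lem3-5} already allows $\delta_0$ to depend on $\min_{\mathbb S^n\times[0,T)}r$ and $\max_{\mathbb S^n\times[0,T)}r$; you may simply use these quantities directly (and Lemma~\ref{lem3-2}, which has no restriction on $\alpha,\beta$) rather than routing through Lemma~\ref{lem3-1}. In fact the detour through $|Dr|$ can be avoided entirely: from $\min_{\mathbb S^n}u=\min_{\mathbb S^n}r$ one has $u/r\ge (\min_{\mathbb S^n\times[0,T)}r)/(\max_{\mathbb S^n\times[0,T)}r)$ at every point, which gives $\delta_0$ immediately.
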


Besides, we also need the following algebra lemma.
\begin{lem}[\cite{LSW,U}]
Suppose $F$ satisfies Condition \ref{con-1} and $\{b^{ij}\}$ be the inverse matrix of $\{h_{ij}\}$. Then
\begin{align}\label{3-11}
(\ddot F^{ij, kl}+2\dot F^{ik}b^{jl})B_{ij}B_{kl}\geq 2 F^{-1}(\dot F^{ij}B_{ij})^2.
\end{align}
\end{lem}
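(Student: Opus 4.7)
The plan is to derive the inequality directly from the concavity of $F_*$ by computing a single second derivative in two different ways. Consider the curve $A(s) = (\mathcal{W}+sB)^{-1}$ in the positive-definite cone, for which $A(0)=\mathcal{W}^{-1}$, $A'(0)=-\mathcal{W}^{-1}B\mathcal{W}^{-1}$ and $A''(0) = 2\mathcal{W}^{-1}B\mathcal{W}^{-1}B\mathcal{W}^{-1}$, and set $\psi(s) := F_*(A(s))$. Because $F_*(X^{-1}) = 1/F(X)$ for positive definite $X$, this is the same as $\psi(s) = 1/F(\mathcal{W}+sB)$, so $\psi''(0)$ can be read off either from derivatives of $F$ at $\mathcal{W}$ or from derivatives of $F_*$ at $\mathcal{W}^{-1}$.

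From the scalar identity $\psi = 1/F$ one differentiates twice at $s=0$ to obtain
\[
\psi''(0) = -F^{-2}\ddot F^{ij,kl}B_{ij}B_{kl} + 2F^{-3}\bigl(\dot F^{ij}B_{ij}\bigr)^{2}.
\]
Alternatively, the chain rule applied to $\psi(s) = F_*(A(s))$ yields
\[
\psi''(0) = \ddot F_*^{ij,kl}(\mathcal{W}^{-1})\,A'(0)_{ij}A'(0)_{kl} + \dot F_*^{ij}(\mathcal{W}^{-1})\,A''(0)_{ij}.
\]
Concavity of $f_*$ translates into $\ddot F_*^{ij,kl}\xi_{ij}\xi_{kl}\leq 0$ for every symmetric $\xi$, so the first term above is non-positive and
\[
\psi''(0) \leq 2\,\dot F_*^{ij}(\mathcal{W}^{-1})\bigl(\mathcal{W}^{-1}B\mathcal{W}^{-1}B\mathcal{W}^{-1}\bigr)_{ij}.
\]

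Equating the two expressions for $\psi''(0)$ and multiplying by $F^{2}$ reduces the lemma to the single identity
\[
\dot F_*^{ij}(\mathcal{W}^{-1})\bigl(\mathcal{W}^{-1}B\mathcal{W}^{-1}B\mathcal{W}^{-1}\bigr)_{ij} = F^{-2}\,\dot F^{ik}b^{jl}B_{ij}B_{kl}.
\]
This matching identity is the main obstacle: it is a pure computation, but the signs and factors coming from the matrix inversion must be tracked carefully. To verify it I would exploit orthogonal invariance to assume $\mathcal{W} = \mathrm{diag}(\lambda_i)$, differentiate $f(\lambda)f_*(1/\lambda)\equiv 1$ with respect to $\lambda_i$ to extract the eigenvalue identity $\dot f^i\lambda_i^2 = F^2\,\dot f_*^i(1/\lambda)$, and substitute into $(\mathcal{W}^{-1}B\mathcal{W}^{-1}B\mathcal{W}^{-1})_{ii} = \lambda_i^{-2}\sum_p B_{ip}^2/\lambda_p$; both sides then collapse to the same sum $F^{-2}\sum_{i,j}\dot f^i B_{ij}^2/\lambda_j$. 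Once the identity is in place, a one-line rearrangement of the two expressions for $\psi''(0)$ yields (\ref{3-11}), and the argument makes transparent that the lemma is simply a matrix-valued restatement of the concavity of $F_*$ along directions of the form $\mathcal{W}^{-1}B\mathcal{W}^{-1}$.
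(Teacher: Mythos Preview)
Your argument is correct: the identity $\dot f_*^i(1/\lambda)=F^{-2}\lambda_i^2\dot f^i$ and the diagonal computation you outline check out, and concavity of $f_*$ does yield $\ddot F_*^{ij,kl}\xi_{ij}\xi_{kl}\le 0$ for all symmetric $\xi$ (via formula \eqref{2-1}, since symmetry plus concavity forces $(\dot f_*^i-\dot f_*^k)/(\mu_i-\mu_k)\le 0$). The paper does not prove this lemma at all---it simply quotes it from \cite{LSW,U}---and your derivation via the second derivative of $s\mapsto F_*\bigl((\mathcal{W}+sB)^{-1}\bigr)=1/F(\mathcal{W}+sB)$ is exactly the standard computation behind those references.
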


\begin{lem}\label{lem3-6}
Let $X(x, t)$ be a smooth, closed and uniformly convex solution to the normalized flow \eqref{1-2} or \eqref{1-2b} for $t\in[0, T)$, which encloses the origin. Assume $\alpha\geq\beta+1$ when $\Phi=r^\alpha F^\beta$ or $\alpha>\beta+1$ when $\Phi=u^\alpha F^\beta$.
Then there exists a positive constant $C_4$, depending only on $\alpha, \beta$, $\mathop{\max}\limits_{\mathbb S^n\times[0, T)}r$ and $\mathop{\min}\limits_{\mathbb S^n\times[0, T)}r$, such that the principal curvatures of $X(\cdot, t)$ are bounded from below, i.e.,
$$\lambda_i(\cdot, t)\geq C_4,$$
for all $t\in[0, T)$ and $i=1,\cdots, n.$
\end{lem}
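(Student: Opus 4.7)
The plan is to bound the largest eigenvalue of the inverse Weingarten matrix $b = \{b^{ij}\}$ from above, which is equivalent to bounding $\lambda_{\min}$ from below. To this end, I would introduce an auxiliary function
\begin{align*}
W(x,t) = \log\lambda_{\max}\bigl(b(x,t)\bigr) - A\log(u-\eta)
\end{align*}
(respectively $-A\log r$ when $\Phi=r^\alpha F^\beta$), where $\eta = \tfrac12\min_{\mathbb S^n\times[0,T)} u$ and $A>0$ is a large constant chosen at the end. By Lemma \ref{lem3-1}, $u-\eta$ and $r$ are strictly positive and bounded. Suppose $W$ attains a new spatial maximum at $(x_0,t_0)\in M\times(0,T)$, and rotate the local frame so that $h_{ij}(x_0,t_0)$ is diagonal with $\lambda_1\le\cdots\le\lambda_n$ and the largest eigenvalue of $b$ is achieved by $b^{11}=1/\lambda_1$. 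Following the standard trick (see \cite{A4}), the non-smoothness of $\lambda_{\max}(b)$ at multiple eigenvalues is handled by working with $b^{11}$ directly.

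The next step is to derive the evolution equation of $b^{11}$ via the identity $\partial_t b^{ij} = -b^{ik}(\partial_t h_{kl})b^{lj}$ using Lemma \ref{lem2-2}, then assemble $\partial_t W$. At $(x_0,t_0)$, use $DW=0$ to express the derivatives $h_{11,k}$ in terms of derivatives of the $\log(u-\eta)$ (or $\log r$) correction, and use $D^2W\le 0$ to discard the terms where $\dot F^{kl}$ is contracted against the spatial Hessian of $b^{11}$. The main obstruction is the $\ddot F$-term inherited from \eqref{2-1}: after inversion it appears with the bad sign. Here the inverse-concavity inequality \eqref{3-11} is crucial: writing it with $B_{ij}=h_{ij,1}$ converts the $\ddot F$-contribution into a quadratic expression in $\dot F^{kl}h_{kl,1}$, which together with the other gradient-of-$\log F$ terms in Lemma \ref{lem2-2} and the information extracted from $DW=0$ can be absorbed into the good terms, provided $A$ is sufficiently large.

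Once the gradient terms are controlled, the decisive algebraic contribution to $\partial_t W$ at the maximum is
\begin{align*}
(\beta+1)\Phi - b^{11} - \alpha\Phi\,(b^{11})^{2}\,\frac{u_{11}}{u}
\end{align*}
in the $\Phi = u^\alpha F^\beta$ case, together with an $A$-multiple of $(1-\Phi/(u-\eta))$ from the correction term (which is bounded by Lemma \ref{lem3-4}). Since $u_{11}=\tau_{11}-u=b^{11}-u$, the last term is essentially $-\alpha\Phi(b^{11})^{3}/u$ to leading order when $b^{11}$ is large, and the strict inequality $\alpha>\beta+1$ (combined with the bound on $\Phi\thickapprox F^\beta$ from Lemma \ref{lem3-4}) guarantees this cubic negative term overwhelms the quadratic positive contributions, so $\partial_t W<0$ unless $b^{11}$ is \emph{a priori} bounded. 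For $\Phi = r^\alpha F^\beta$, the analogous computation uses $r_{11}$ from \eqref{2-3}, namely $-r r_{11} = h_{11}\sqrt{r^2+|Dr|^2} - 2r_1^2 - r^2$, and Lemma \ref{lem3-5} (uniformly star-shaped) together with Lemma \ref{lem3-2} ensures that $\alpha\Phi r_{11}/r$ contributes a negative leading term of the correct order; here the weaker assumption $\alpha\ge\beta+1$ suffices because of the extra $h_{11}$ factor that naturally appears.

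I expect the main obstacle to be the algebra of the gradient cancellation in step two: orchestrating the constant $A$, the coefficients coming from $\ddot F$ via \eqref{3-11}, and the terms $\beta(\beta-1)\Phi(\nabla_i\log F)^2$ and $2\alpha\beta\Phi(\nabla_i\log r)(\nabla_i\log F)$ from Lemma \ref{lem2-2} so that all gradient-squared contributions either cancel or are non-positive. Once this book-keeping is done, the maximum-principle argument delivers the desired upper bound on $b^{11}$, depending only on $\alpha,\beta,\max r,\min r$, and the proof is complete.
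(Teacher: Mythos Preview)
Your overall strategy—maximum principle applied to $\log b^{11}$ minus a barrier, using Lemma~\ref{lem2-2} and the inverse-concavity inequality \eqref{3-11}—is exactly the paper's. But the execution has a concrete error: you confuse Hessians in different parametrizations. In Case~II you assert $u_{11}=\tau_{11}-u=b^{11}-u$; that is the spherical Hessian $D^2u$ in the Gauss-map picture, whereas the $u_{ij}$ appearing in Lemma~\ref{lem2-2} is the intrinsic Hessian on $M_t$, for which $\nabla^2_{11}u=h_{11}+\langle X,\nabla h_{11}\rangle-h_{11}^2u$. Consequently $-\alpha\Phi(b^{11})^2u_{11}/u=-\alpha\Phi b^{11}/u+\alpha\Phi+\alpha(\Phi/u)\langle X,\nabla\log\rho\rangle$, and no cubic term $-\alpha\Phi(b^{11})^3/u$ ever appears. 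The paper takes the barrier $-\log u$ with coefficient exactly $1$ (not a large $A$, and not $u-\eta$) so that this gradient piece cancels against its twin in $\partial_t\log u$; what survives is $0\le\Phi\bigl[(\alpha+\beta-1)/b^{11}+(\beta+1-\alpha)/u\bigr]$, and $\alpha>\beta+1$ finishes. Your large-$A$ scheme would instead leave a residual term $A(A-1)\beta\Phi F^{-1}\dot F^{ii}(\nabla_i\log u)^2$ of the wrong sign.

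The same confusion recurs in Case~I: the formula you quote from \eqref{2-3} gives $D^2_{11}r$ on $\mathbb S^n$, not $\nabla^2_{11}r$ on $M_t$. The latter equals $r^{-1}\bigl(-uh_{11}+1-(\nabla_1r)^2\bigr)$, and Lemma~\ref{lem3-5} enters precisely here via $1-(\nabla_1r)^2\ge\delta_0$, yielding the decisive term $-\alpha\Phi(b^{11})^2\delta_0/r^2$ (quadratic, not cubic). Finally, the reason $\alpha\ge\beta+1$ suffices in Case~I is not an ``extra $h_{11}$ factor'': after \eqref{3-11} upgrades the coefficient of $(\nabla_1\log F)^2$ from $\beta(\beta-1)$ to $\beta(\beta+1)$, the gradient quadratic form $\beta(\beta+1)X^2+2\alpha\beta XY+\alpha(\alpha-1)Y^2$ (with $X=\nabla_1\log F$, $Y=\nabla_1\log r$) has discriminant $4\alpha\beta(\beta+1-\alpha)\le 0$ exactly when $\alpha\ge\beta+1$, so it can be discarded outright. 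The paper again uses $-\log u$ (not $-A\log r$) as the barrier in this case.
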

\begin{proof}
Let $\{b^{ij}\}$ be the inverse matrix of $\{h_{ij}\}$. We only need to prove the upper bound of principal radial of curvatures. For this purpose, let us define the following functions
$$\Lambda(x, t)=\max\{b^{ij}\xi_i\xi_j:g^{ij}(x, t)\xi_i\xi_j=1\},$$
and
$$W(x, t)=\log \Lambda(x, t)-\log u(x, t),$$
where $u(x, t)$ is the support function of $M_t$.

Fix $0<T'< T$, suppose $W$ attains its maximum on $\mathbb S^n\times[0, T']$ at $(x_0, t_0)$ with $t_0>0$, that is,
$$\max_{\mathbb S^n\times[0, T']}W=W(x_0, t_0), \quad\quad t_0>0.$$
Choose Riemannian normal coordinates at $(x_0, t_0)$ such that at this point we have
$$g_{ij}=\delta_{ij}, \quad h_{ij}=\lambda_i\delta_{ij}.$$
By a rotation, we may suppose that $\Lambda(x_0, t_0)=b^{ij}(x_0, t_0)\xi_i\xi_j$ with $\xi=(1, 0,\ldots, 0)$.

Let
$$w(x, t)=\log \rho(x, t)-\log u(x, t),$$
where $\rho(x, t)=\frac{b^{11}}{g^{11}}$. Then $w$ attains its maximum at point $(x_0, t_0)$.

\textbf{Case I}: $\Phi=r^\alpha F^\beta$. By the evolution equations for $g_{ij}$ and $h_{ij}$ in Lemma \ref{lem2-2}, we have, at point $(x_0, t_0)$,
\begin{align}\label{3-12}
\partial_t \rho&=-(b^{11})^2\partial_t h_{11}+b^{11}\partial_t g_{11}\nn\\
&=-(b^{11})^2\beta\Phi F^{-1}\dot F^{kl}h_{11, kl}-(b^{11})^2\beta\Phi F^{-1}\ddot F^{kl, pq}h_{kl,1}h_{pq, 1}-b^{11}\beta\Phi F^{-1}\dot F^{kl}h_{kp}h^p_l\nonumber\\
&\quad-(b^{11})^2\Phi\Big(\beta(\beta-1)(\nabla_1\log F)^2+2\alpha\beta(\nabla_1\log r)(\nabla_1\log F)\nn\\
&\quad\quad\quad\quad\quad\quad+\alpha(\alpha-1) (\nabla_1\log r)^2\Big)+(\beta-1)\Phi +b^{11}-\alpha\Phi(b^{11})^2\frac{1}{r}r_{11}.
\end{align}
From the definition of $\rho(x, t)=\frac{b^{11}}{g^{11}}$, it follows that, at point $(x_0, t_0)$,
\begin{align*}
\nabla^2_{ij}\rho&=(-b^{1p}b^{1q}h_{pq,i})_j\\
&=2b^{1r}b^{ps}b^{1q}h_{rsj}h_{pqi}-b^{1p}b^{1q}h_{pq,ij}\\
&=-(b^{11})^2h_{11, ij}+2(b^{11})^2b^{pq}h_{ip1}h_{jq1}.
\end{align*}
Thus, equation \eqref{3-12} can be rewrite as
\begin{align*}
\partial_t \rho
&=\beta\Phi F^{-1}\dot F^{kl}\nabla^2_{kl}\rho-(b^{11})^2\beta\Phi F^{-1}(\ddot F^{kl, pq}h_{kl,1}h_{pq, 1}+2\dot F^{kl}b^{pq}h_{kp1}h_{lq1})\\
&\quad-b^{11}\beta\Phi F^{-1}\dot F^{kl}h_{kp}h^p_l+(\beta-1)\Phi +b^{11}-\alpha\Phi(b^{11})^2\frac{1}{r}r_{11}\\
&\quad-(b^{11})^2\Phi\Big(\beta(\beta-1)(\nabla_1\log F)^2+2\alpha\beta(\nabla_1\log r)(\nabla_1\log F)\\
&\quad\quad\quad\quad\quad\quad+\alpha(\alpha-1) (\nabla_1\log r)^2\Big)\nn\\
&\leq \beta\Phi F^{-1}\dot F^{kl}\nabla^2_{kl}\rho+(\beta-1)\Phi +b^{11}-\alpha\Phi(b^{11})^2\frac{1}{r}r_{11}\nn\\
&\quad-(b^{11})^2\Phi\Big(\beta(\beta+1)(\nabla_1\log F)^2+2\alpha\beta(\nabla_1\log r)(\nabla_1\log F)\nn\\
&\quad\quad\quad\quad\quad\quad+\alpha(\alpha-1) (\nabla_1\log r)^2\Big),
\end{align*}
where \eqref{3-11} is used in the inequality.
Since $\alpha\geq \beta+1$ and
$$-2\alpha\beta(\nabla_1\log r)(\nabla_1\log F)\leq \beta(\beta+1)(\nabla_1\log F)^2+\frac{\alpha^2\beta}{\beta+1}(\nabla_1 \log r)^2,$$
hence,
\begin{align}\label{3-13}
\partial_t \rho
&\leq \beta\Phi F^{-1}\dot F^{kl}\nabla^2_{kl}\rho+(\beta-1)\Phi +b^{11}-\alpha\Phi(b^{11})^2\frac{1}{r}r_{11}\nn\\
&\quad-\frac{\alpha(\alpha-\beta-1)}{\beta+1}(b^{11})^2\Phi (\nabla_1\log r)^2\nn\\
&\leq \beta\Phi F^{-1}\dot F^{kl}\nabla^2_{kl}\rho+(\beta-1)\Phi +b^{11}-\alpha\Phi(b^{11})^2\frac{1}{r}r_{11}.
\end{align}
On the other hand, notice that
\begin{align*}
\nabla_1 r=r^{-1}\langle X_1, X\rangle,
\end{align*}
 $X_1$ and $\frac{X}{r}$ are unit vectors, we have by Lemma \ref{lem3-5} that
$$g_{11}-(\nabla_1 r)^2=1-\left\langle X_1, \frac{X}{r}\right \rangle^2\geq \delta_0$$
for some constant $\delta_0>0$.
Therefore
\begin{align}\label{3-14}
\nabla^2_{11}r&=(r^{-1}\langle X_1, X\rangle)_1\nn\\
&=-\frac{1}{r^2}\frac{\langle X_1, X\rangle\langle X_1, X\rangle}{r}+\frac{1}{r}\Big(\langle X_{11}, X\rangle+\langle X_1, X_1\rangle\Big)\nn\\
&=r^{-1}\left(-uh_{11}+g_{11}-(\nabla_1r)^2\right)\nn\\
&\geq -\frac{uh_{11}}{r}+\frac{\delta_0}{r}.
\end{align}
Submitting \eqref{3-14} into \eqref{3-13}, we obtain
\begin{align*}
\partial_t \rho
&\leq \beta\Phi F^{-1}\dot F^{kl}\nabla^2_{kl}\rho+(\beta-1)\Phi +\left(1+\frac{\alpha u}{r^2}\Phi\right)b^{11}-\alpha\Phi(b^{11})^2\frac{\delta_0}{r^2}.
\end{align*}
Thus
\begin{align}\label{3-15}
\partial_t \log \rho
&\leq \beta\Phi F^{-1}\dot F^{kl}\nabla^2_{kl}\log\rho+\beta\Phi F^{-1}\dot F^{ij}\nabla_i \log \rho\nabla_j\log \rho\nn\\
&\quad+\frac{\beta-1}{\rho}\Phi+\left(1+\frac{\alpha u}{r^2}\Phi\right)-\alpha\Phi b^{11}\frac{\delta_0}{r^2}.
\end{align}
Notice that
\begin{align*}
\nabla^2_{ij} u&=\nabla_j\langle X, h^l_iX_l\rangle\\
&=h_{ij}+\langle X, \nabla h_{ij}\rangle-h_{ik}h^k_j u,
\end{align*}
by the evolution equation for $\nu$ in Lemma \ref{lem2-2}, we obtain
\begin{align*}
\partial_t u&=\langle-\Phi \nu+X, \nu\rangle+\langle X, \nabla \Phi\rangle\\
&=\langle X, \beta\Phi F^{-1}\dot F^{ij}\nabla h_{ij}+\alpha \Phi r^{-1}\nabla r\rangle-\Phi+u\\
&=\beta\Phi F^{-1}\dot F^{ij} (\nabla^2_{ij}u-h_{ij}+h_{ik}h^k_j u)+\alpha\Phi g^{ij}\langle X_i, \frac{X}{|X|}\rangle\langle X_j, \frac{X}{|X|}\rangle-\Phi+u\\
&\geq \beta\Phi F^{-1}\dot F^{ij}\nabla^2_{ij}u-(\beta+1)\Phi+u,
\end{align*}
which implies
\begin{align}\label{3-16}
\partial_t \log u&\geq \beta\Phi F^{-1}\dot F^{ij}\nabla^2_{ij}\log u+\beta\Phi F^{-1}\dot F^{ij}\nabla_i \log u\nabla_j\log u\nn\\
&\quad -\frac{1}{u}(\beta+1)\Phi+1.
\end{align}
Therefore, at point $(x_0, t_0)$, by \eqref{3-15} and \eqref{3-16} we have
\begin{align*}
0\leq \partial_t w&=\partial_t \log \rho-\partial_t \log u\nn\\
&\leq  \beta\Phi F^{-1}\dot F^{ij}\nabla^2_{ij}(\log \rho-\log u)+\beta\Phi F^{-1}\dot F^{ii}\left((\nabla_i \log \rho)^2-(\nabla_i \log u)^2\right)\nn\\
&\quad +\frac{\beta-1}{\rho}\Phi+\left(1+\frac{\alpha u}{r^2}\Phi\right)-\alpha\Phi b^{11}\frac{\delta_0}{r^2}+\frac{1}{u}(\beta+1)\Phi-1\nn\\
&\leq \frac{\beta-1}{b^{11}}\Phi-\alpha\Phi b^{11}\frac{\delta_0}{r^2}+C,
\end{align*}
where Lemma \ref{lem3-1} and Lemma \ref{lem3-4} are used in the last inequality. By above inequality, we have $b^{11}$ is bounded .

\textbf{Case II}: $\Phi=u^\alpha F^\beta$. By Lemma \ref{lem2-2} and
\begin{align*}
\nabla^2_{ij} u=h_{ij}+\langle X, \nabla h_{ij}\rangle-h_{ik}h^k_j u,
\end{align*}
we have similarly
\begin{align*}
\partial_t \rho
&\leq \beta\Phi F^{-1}\dot F^{kl}\nabla^2_{kl}\rho+(\beta-1)\Phi +b^{11}-\alpha\Phi(b^{11})^2\frac{1}{u}u_{11}\\
&=\beta\Phi F^{-1}\dot F^{kl}\nabla^2_{kl}\rho+(\beta-1)\Phi +(1-\alpha\frac{\Phi}{u})b^{11}+\alpha\Phi+\alpha\frac{\Phi}{u}\langle X, \nabla \rho\rangle,
\end{align*}
which implies
\begin{align}\label{3-13b}
\partial_t \log \rho
&\leq \beta\Phi F^{-1}\dot F^{kl}\nabla^2_{kl}\log\rho+\beta\Phi F^{-1}\dot F^{ij}\nabla_i \log \rho\nabla_j\log \rho\nn\\
&\quad+\frac{\beta-1}{\rho}\Phi+\left(1-\frac{\alpha \Phi}{u}\right)+\frac{\alpha\Phi}{ b^{11}}+\alpha\frac{\Phi}{u}\langle X, \nabla\log \rho\rangle.
\end{align}
Since
\begin{align*}
\partial_t u&=\langle-\Phi \nu+X, \nu\rangle+\langle X, \nabla \Phi\rangle\\
&=\langle X, \beta\Phi F^{-1}\dot F^{ij}\nabla h_{ij}+\alpha \Phi u^{-1}\nabla u\rangle-\Phi+u\\
&=\beta\Phi F^{-1}\dot F^{ij} (\nabla^2_{ij}u-h_{ij}+h_{ik}h^k_j u)+\alpha\Phi \langle X, \nabla \log u\rangle-\Phi+u\\
&\geq \beta\Phi F^{-1}\dot F^{ij}\nabla^2_{ij}u-(\beta+1)\Phi+\alpha\Phi \langle X, \nabla \log u\rangle+u,
\end{align*}
thus,
\begin{align}\label{3-16b}
\partial_t \log u&\geq \beta\Phi F^{-1}\dot F^{ij}\nabla^2_{ij}\log u+\beta\Phi F^{-1}\dot F^{ij}\nabla_i \log u\nabla_j\log u\nn\\
&\quad -\frac{1}{u}(\beta+1)\Phi+1+\alpha\frac{\Phi}{u}\langle X, \nabla \log u\rangle.
\end{align}
It follows from \eqref{3-13b} and \eqref{3-16b} that, at point $(x_0, t_0)$,
\begin{align*}
0\leq \partial_t w&=\partial_t \log \rho-\partial_t \log u\nn\\
&\leq  \beta\Phi F^{-1}\dot F^{ij}\nabla^2_{ij}(\log \rho-\log u)+\beta\Phi F^{-1}\dot F^{ii}\left((\nabla_i \log \rho)^2-(\nabla_i \log u)^2\right)\nn\\
&\quad +\frac{\beta-1}{\rho}\Phi+\left(1-\frac{\alpha \Phi}{u}\right)+\frac{\alpha\Phi}{ b^{11}}+\alpha\frac{\Phi}{u}\langle X, \nabla\log \rho-\nabla \log u\rangle-1\nn\\
&\quad+\frac{1}{u}(\beta+1)\Phi\nn\\
&\leq \frac{\beta-1+\alpha}{b^{11}}\Phi+(\beta+1-\alpha)\frac{\Phi}{u}.
\end{align*}
Notice that $\alpha> \beta+1$, we have $b^{11}$ is bounded.
From this the assertion follows.
\end{proof}

\begin{rem}\label{rem3.1}
For special inverse concave curvature function $F=K^{\frac{s}{n}}F_1^{1-s}(s\in(0, 1])$, where $K$ is the Gauss curvature and $F_1$ satisfies Condition \ref{con-1}, we can also obtain that Lemma \ref{lem3-6} holds when $\alpha=\beta+1$ for the case $\Phi=u^\alpha F^\beta$. The reason are as follows:

Let us define auxiliary function
 $$w(x, t)=\log \rho(x, t)-\varepsilon\log u(x, t),$$
where $\rho(x, t)=\frac{b^{11}}{g^{11}}$ and $\varepsilon$ is a positive constant to be chosen later.
By $\nabla_i \log u=\frac{\lambda_i}{u}$, \eqref{3-13b} and \eqref{3-16b} we can get that, at point $(x_0, t_0)$,
\begin{align}\label{3-21}
0\leq\partial_t w&=\partial_t \log \rho-\varepsilon\partial_t \log u\nn\\
&\leq  \beta\Phi F^{-1}\dot F^{ij}\nabla^2_{ij}(\log \rho-\varepsilon\log u)+\beta\Phi F^{-1}\dot F^{ii}\left((\nabla_i \log \rho)^2-\varepsilon(\nabla_i \log u)^2\right)\nn\\
&\quad +\frac{2\beta}{b^{11}}\Phi+(1-\varepsilon)\left(1-\frac{\beta+1 }{u}\Phi\right)+\alpha\frac{\Phi}{u}\langle X, \nabla\log \rho-\varepsilon\nabla \log u\rangle\nn\\
&\leq \frac{2\beta\Phi}{b^{11}}+(1-\varepsilon)\left(1-(\beta+1)(uF)^\beta-\varepsilon\beta\Phi F^{-1}\dot F^i \frac{\lambda_i^2}{u^2}\right).
\end{align}
It follows from Corollary \ref{cor-1} that $K$ and $F_1$ are bounded from above. If $b^{11}$ is not bounded from above (i.e., $\lambda_1$ is small enough), then by Corollary \ref{cor-1} again we have
$$\dot F^i\lambda_i^2\leq \lambda_n \dot F^i\lambda_i\leq CF=CK^{\frac{s}{n}}F_1^{1-s}\rightarrow 0.$$
Combining Lemma \ref{lem3-1} with Lemma \ref{lem3-4} gives
$$\frac{2\beta\Phi}{b^{11}}-(1-\varepsilon)\left((\beta+1)(uF)^\beta+\varepsilon\beta\Phi F^{-1}\dot F^i \frac{\lambda_i^2}{u^2}\right)\rightarrow 0.$$
Choosing $\varepsilon > 1$, then inequality \eqref{3-21} becomes
$$0\leq \partial_t w<0,$$
it is a contradiction. Hence $b^{11}$ is bounded, that is, the principal curvatures $\lambda_i$ are bounded from below.


\end{rem}

Last we will prove the higher regularity for solutions of  normalized flow \eqref{1-2} or \eqref{1-2b}. As we all know, the radial
graphical representation of flow is often used to derive the key $C^{2,\alpha}$ estimates. But in this paper, we assumed that
$F$ is inverse concave, hence we apply the Gauss map parametrization of flow which has
been used widely for convex hypersurfaces \cite{AMZ,AW,LL1,W} and write flow as a parabolic
equation of support function which is concave with respect to its arguments.
\begin{thm}\label{thm2}
Let $M_0$ be a smooth, closed, uniformly convex hypersurface in $\mathbb R^{n+1}$, which encloses the origin. Suppose $\alpha\geq \beta+1$ when $\Phi=r^\alpha F^\beta$ or $\alpha>\beta+1$ when $\Phi=u^\alpha F^\beta$. Then  the normalized flow \eqref{1-2} or \eqref{1-2b} has a unique smooth, closed and uniformly convex solution $M_t$ for all time $t\geq 0$.

Moreover, for any $k\geq 0$,
the radial function of $M_t$ satisfies the following a priori estimates
\begin{align*}
||D^k r(x, t)||\leq C, \quad\quad (x, t)\in\mathbb S^n\times[0, \infty),
\end{align*}
where $C>0$ depends only on $n, k, \alpha, \beta$ and $M_0$.
\end{thm}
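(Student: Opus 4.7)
The plan is to reduce the normalized flow to the scalar equation \eqref{2-5} (or \eqref{2-5b}) for the support function $u(z,t)$ via the Gauss map parametrization, and then to apply standard fully nonlinear parabolic regularity theory on a finite time interval $[0,T')$ on which the a priori estimates of Lemmas \ref{lem3-1}, \ref{lem3-2} together with the two-sided curvature bounds of Corollary \ref{cor-1} and Lemma \ref{lem3-6} are already available. Since the eigenvalues of $\tau_{ij}=D_iD_ju+\delta_{ij}u$ are the principal radii $\lambda_i^{-1}$, the curvature bounds translate into two-sided bounds on the eigenvalues of $\tau$, which give uniform parabolicity of the operator $\mathcal L(D^2u,Du,u,z)=-r^\alpha F_*^{-\beta}(\tau)+u$ (respectively $-u^\alpha F_*^{-\beta}(\tau)+u$) throughout $[0,T')$.

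The decisive structural input is concavity. The inverse-concavity of $F$ means that $F_*$ is concave and positive as a function of the symmetric matrix $\tau$. Composing with the convex decreasing scalar map $y\mapsto y^{-\beta}$, which is legitimate since $\beta\ge 1>0$, shows that $F_*^{-\beta}(\tau)$ is convex in $\tau$, so $-F_*^{-\beta}(\tau)$ is concave. Because $\tau$ is affine in $D^2u$, and the prefactor $r^\alpha=(u^2+|Du|^2)^{\alpha/2}$ (respectively $u^\alpha$) does not involve $D^2u$, the full operator $\mathcal L$ is concave in $D^2u$. This is precisely the structural hypothesis required by the parabolic Evans--Krylov theorem. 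I would therefore first invoke Krylov--Safonov to obtain a uniform $C^{\gamma,\gamma/2}$ bound on $Du$, then Evans--Krylov to upgrade this to a uniform $C^{2+\gamma,1+\gamma/2}(\mathbb S^n\times[0,T'])$ bound on $u$. Differentiating the equation and iterating parabolic Schauder estimates then yields uniform $C^{k,\gamma}$ bounds on $u$ for every $k\geq 0$.

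For global existence I would combine these a priori estimates with short-time existence of smooth uniformly convex solutions, which follows from linearization and the implicit function theorem on the compact manifold $\mathbb S^n$, the linearization being uniformly parabolic with smooth coefficients because $M_0$ is smooth and uniformly convex. A standard continuation argument concludes: if the maximal existence time $T$ were finite, the uniform higher-order estimates together with Arzel\`a--Ascoli would produce a smooth uniformly convex limiting hypersurface at time $T$ from which the flow could be restarted, contradicting maximality; hence $T=\infty$. The bounds $\|D^k r\|\leq C$ then follow from the bounds on $u$ together with the identity $r=\sqrt{u^2+|Du|^2}$. The main obstacle, and the reason the Gauss-map parametrization is used in place of the radial one, is securing concavity of the operator: the radial PDE for $r$ need not be concave in $D^2r$ for a general inverse-concave $F$, whereas the support-function formulation converts inverse-concavity of $F$ directly into concavity of $\mathcal L$ in $D^2u$, opening the door to Evans--Krylov.
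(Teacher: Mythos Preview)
Your proposal is correct and follows essentially the same route as the paper: pass to the Gauss-map parametrization \eqref{2-5}/\eqref{2-5b}, use the $C^0$--$C^1$ bounds and the two-sided curvature bounds from Lemmas \ref{lem3-1}, \ref{lem3-2}, Corollary \ref{cor-1} and Lemma \ref{lem3-6} to obtain uniform parabolicity and $C^2$ control of $u$, exploit the concavity of $F_*$ to make the operator concave in $D^2u$, and then invoke Krylov/Evans--Krylov plus Schauder bootstrapping, transferring back to $r$ via $r=\sqrt{u^2+|Du|^2}$. The only cosmetic difference is that the paper verifies concavity by explicitly computing $\ddot G^{ij,kl}=\beta r^\alpha F_*^{-\beta-1}\ddot F_*^{ij,kl}-\beta(\beta+1)r^\alpha F_*^{-\beta-2}\dot F_*^{ij}\dot F_*^{kl}$ (and similarly for $u^\alpha$), whereas you argue abstractly via the composition rule for a convex decreasing function with a concave function; both are valid and lead to the same conclusion.
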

\begin{proof}
\textbf{Case I}: $\Phi=r^\alpha F^\beta$. It follows from Corollary \ref{cor-1}, Lemma \ref{lem3-6} and equation \eqref{2-6} that
$D^2 u$ are uniformly bounded. Thus the equation
\begin{align*}
\partial_t u&=-(u^2+|Du|^2)^{\frac{\alpha}{2}}F^{-\beta}_*(\tau_{ij})+u\nn\\
&\triangleq G(D^2u, Du, u)
\end{align*}
is uniformly parabolic. Straightforward computations give
\begin{align*}
\dot G^{ij}&=\frac{\partial G}{\partial(D^2_{ij}u)}=\beta (u^2+|Du|^2)^{\frac{\alpha}{2}}F^{-\beta-1}_*\dot F_*^{pq}\frac{\partial\tau_{pq}}{\partial(D^2_{ij}u)}\\
&=\beta(u^2+|Du|^2)^{\frac{\alpha}{2}}F^{-\beta-1}_*\dot F_*^{ij},
\end{align*}
and
\begin{align}\label{3-18}
  \ddot G^{ij,kl}
  &=-(\beta+1)\beta(u^2+|Du|^2)^{\frac{\alpha}{2}}F^{-\beta-2}_*\dot F_*^{ij}\dot F_*^{kl}+\beta(u^2+|Du|^2)^{\frac{\alpha}{2}}F^{-\beta-1}_*\ddot F_*^{ij,kl}.
\end{align}
\textbf{Case II}: $\Phi=u^\alpha F^\beta$. Similarly, we can obtain that the
equation
\begin{align*}
\partial_t u&=-u^{\alpha}F^{-\beta}_*(\tau_{ij})+u\triangleq G(D^2u, Du, u)
\end{align*}
is uniformly parabolic, and
\begin{align}\label{3-18b}
  \ddot G^{ij,kl}
  &=-(\beta+1)\beta u^{\alpha}F^{-\beta-2}\dot F_*^{ij}\dot F_*^{kl}+\beta u^{\alpha}F^{-\beta-1}\ddot F_*^{ij,kl}.
\end{align}

By the concavity of $F_*$, from \eqref{3-18} or \eqref{3-18b} we know operator $G$ is concave with respect to $ D^2u$. From the uniform $C^2$ estimates on $u$ in space-time, we can apply the H\"older estimates of \cite{KS} to obtain the $C^{2,\alpha}$ estimate on $u$ and $C^\alpha$ estimate on $\partial_t u$ in space-time. By standard parabolic theory, the bounds on all higher derivatives of $u$ can be established. Therefore, estimates for higher order derivatives of radial function $r$ are established by \eqref{3-2}. Hence the long time existence and smoothness of solutions for the normalized flow \eqref{1-2} or \eqref{1-2b} are obtained. The
uniqueness of smooth solutions also follows from the parabolic theory. Thus this theorem holds.
\end{proof}
\begin{rem}\label{rem3.2}
For special inverse concave curvature function $F=K^{\frac{s}{n}}F_1^{1-s}(s\in(0, 1])$,  by Corollary \ref{cor-1}, Remark \ref{rem3.1} and equation \eqref{2-6}, we can also obtain that Theorem \ref{thm2} holds when $\alpha=\beta+1$ for the case $\Phi=u^\alpha F^\beta$.
\end{rem}
\section{Proof of Theorem \ref{thm1} and Theorem \ref{thm4}}
Previous section shows the solutions $M_t$ to the normalized flow \eqref{1-2} or \eqref{1-2b} exists for all time $t>0$ and remains smooth and uniformly convex. In this section, we will prove the asymptotical convergence to a sphere of $M_t$. We begin by showing the following lemma.
\begin{lem}\label{lem4.1}
Let $X(\cdot, t)$ be a smooth uniformly convex solution to \eqref{1-2} or \eqref{1-2b}. Suppose $\alpha\geq \beta+1$ when $\Phi=r^\alpha F^\beta$ or $\alpha>\beta+1$ when $\Phi=u^\alpha F^\beta$, then there exist positive constants $C$ and  $\gamma$, depending only on $n, \alpha, \beta$ and $M_0$, such that
\begin{align}\label{4-1}
\max_{\mathbb S^n}\frac{|Dr(\cdot, t)|}{r(\cdot, t)}\leq Ce^{-\gamma t}, \quad\quad t>0.
\end{align}
\end{lem}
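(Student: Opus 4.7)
The plan is to prove exponential decay of the scale-invariant quantity
\[
Q(\xi,t):=\frac{|Dr(\xi,t)|^2}{r(\xi,t)^2}
\]
by a maximum principle argument. From the identity $u=r^2/\sqrt{r^2+|Dr|^2}$ combined with $r^2=u^2+|Du|^2$, one readily verifies that $|Du|^2/u^2=|Dr|^2/r^2$, so I can equivalently bound $Q$ in the Gauss parametrization on $\mathbb{S}^n$, which is more convenient since the $C^2$ estimates of Section~3 were established there. Writing the normalized flow as $\partial_t u=-\Xi+u$ with $\Xi=r^\alpha F_*^{-\beta}(\tau)$ or $u^\alpha F_*^{-\beta}(\tau)$ and $\tau_{ij}=D_iD_ju+u\delta_{ij}$, the principal elliptic operator that governs the evolution is $\mathcal{L}:=\beta\,\Xi\,F_*^{-1}\dot F_*^{ij}D_iD_j$.

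I would compute the evolution of $|Du|^2$ and of $Q$ by differentiating $\partial_t u_k=D_k(-\Xi+u)$, expanding $D_k\Xi$ via $\alpha\Xi D_k\log r$ (or $\alpha\Xi D_k\log u$) plus $-\beta\Xi F_*^{-1}\dot F_*^{ij}(u_{ijk}+u_k\delta_{ij})$, and applying the Ricci identity on $\mathbb{S}^n$ (which commutes $D_kD_iu_j$ with $D_iD_ku_j$ up to terms of the form $\sigma_{kj}u_i-\sigma_{ki}u_j$). After contracting with $u^k$ and dividing by $u^2$, this produces an evolution inequality of schematic form
\[
\partial_t Q\leq \mathcal{L}Q+\mathcal{B}(DQ,Du)-c\,Q+\mathcal{E},
\]
where the gradient contractions $\mathcal{B}$ vanish at a critical point and $\mathcal{E}$ collects terms that can be bounded using the critical-point identities and the a priori estimates of Section~3.

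At an interior spatial maximum $(\xi_0,t_0)$ of $Q$ with $t_0>0$, the conditions $DQ(\xi_0,t_0)=0$ and $D^2Q(\xi_0,t_0)\leq 0$ yield the pointwise identity $u u^k u_{ki}=|Du|^2 u_i$, which I would use to cancel the first-order terms, while $\mathcal{L}Q\leq 0$ follows from positivity of $\dot F_*^{ij}$. The hypothesis $\alpha\geq\beta+1$ (respectively $\alpha>\beta+1$ for $\Xi=u^\alpha F_*^{-\beta}$) is precisely what is needed so that the coupling between the normalization term $+u$ and the curvature term produces a strictly negative coefficient in front of $Q_{\max}(t)$. Together with the uniform two-sided bounds on $u$, $r$ and on the eigenvalues of $\dot F_*^{ij}$ furnished by Lemma~\ref{lem3-1}, Lemma~\ref{lem3-2}, Corollary~\ref{cor-1} and Lemma~\ref{lem3-6}, this gives an ODE $\tfrac{d}{dt}Q_{\max}(t)\leq -2\gamma Q_{\max}(t)$ with $\gamma>0$ depending only on $n,\alpha,\beta,M_0$. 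Gronwall then yields $Q_{\max}(t)\leq Q_{\max}(0)e^{-2\gamma t}$, and taking square roots delivers \eqref{4-1}.

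The main obstacle I anticipate is the algebraic bookkeeping needed to isolate the negative coefficient $-c$: many first-order terms involving $Du$, $DF_*$ and their contractions must be carefully collected, and the critical-point identities used to cancel or absorb them. A further subtlety arises in the borderline case $\alpha=\beta+1$ for the flow \eqref{1-2}, where every centered sphere of arbitrary radius is stationary, so decay of $Q$ cannot come from a restoring force on the radius $r$; instead it must be extracted from the misalignment between $r=|X|$ and $u=\langle X,\nu\rangle$ alone, which is exactly what $Q=r^2/u^2-1$ measures, and which the inverse-concavity assumption on $F$ makes it possible to control via $\dot F_*^{ij}\tau_{ij}=F_*$ in the computation.
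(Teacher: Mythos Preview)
Your overall strategy---a maximum principle for $Q=|Dr|^2/r^2$---matches the paper's, and the identity $|Du|^2/u^2=|Dr|^2/r^2$ is correct. The paper, however, works in the \emph{radial} parametrization: it sets $w=\log r$, $Q=\tfrac12|Dw|^2$, and differentiates \eqref{4-2} directly. This is cleaner than going through the Gauss map, because the speed factors as $e^{(\alpha-\beta-1)w}$ times a function of $Dw,D^2w$ only.

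More importantly, you misidentify the source of the strictly negative coefficient. The normalization term $+X$ contributes only the constant $+1$ to $\partial_t w$, which disappears upon differentiation and plays no role whatsoever in $\partial_t Q$. The hypothesis $\alpha\ge\beta+1$ is \emph{not} what makes $-c<0$; it merely guarantees that the extra term $-(\alpha-\beta-1)|Dw|^2 F^\beta\cdot(\text{positive})$, arising from $w_kD_k e^{(\alpha-\beta-1)w}$, has a favorable sign and can be discarded. The genuine decay comes from the Ricci curvature of $\mathbb S^n$: after commuting derivatives and using the critical-point relations $w_kw_{ki}=0$, $w_kw_{kij}\le -w_{ki}w_{kj}$, one arrives at
\[
\partial_t Q\le (\text{positive})\cdot\dot F^{ij}\bigl(w_iw_j-\delta_{ij}|Dw|^2\bigr)\le (\text{positive})\cdot\Bigl(\max_i\dot F^{ii}-\sum_i\dot F^{ii}\Bigr)|Dw|^2,
\]
and strict monotonicity of $f$ together with the two-sided curvature bounds from Corollary~\ref{cor-1} and Lemma~\ref{lem3-6} force $\max_i\dot F^{ii}-\sum_i\dot F^{ii}\le -C<0$. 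Your final remark that inverse concavity via $\dot F_*^{ij}\tau_{ij}=F_*$ drives the borderline case is also off target: that identity is used in Lemma~\ref{lem3-4}, not here. If you rerun your computation (in either parametrization) with this mechanism in mind, the argument closes.
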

\begin{proof}
Denote $w=\log r$. By \eqref{2-3}, we have
\begin{align*}
g_{ij}&=e^{2w}(\delta_{ij}+w_iw_j),\\
g^{ij}&=e^{-2w}\left(\delta^{ij}-\frac{w^iw^j}{1+|Dw|^2}\right),\\
h_{ij}&=e^w(1+|Dw|^2)^{-\frac{1}{2}}(\delta_{ij}+w_iw_j-w_{ij})\\
h^i_j&=e^{-w}(1+|Dw|^2)^{-\frac{1}{2}} \tilde g^{ik}a_{kj},
\end{align*}
where $\tilde g^{ik}=\delta^{ik}-\frac{w^iw^k}{1+|Dw|^2}$ and $a_{ij}=\delta_{ij}+w_iw_j-w_{ij}$.

\textbf{Case I}: $\Phi=r^\alpha F^\beta$.
From \eqref{2-4} it follows that
\begin{align}\label{4-2}
\partial_ t w=-(1+|D w|^2)^\frac{1-\beta}{2}e^{(\alpha-\beta-1)w}F^\beta(\tilde g^{ik}a_{kj})+1.
\end{align}
Define auxiliary function
\begin{align*}
Q=\frac{1}{2}|D w|^2.
\end{align*}
At the point where G attains its spatial maximum, we
have
\begin{align}\label{4-3}
0=D_i Q=\sum_k w_kw_{ki},
\end{align}
and
\begin{align}\label{4-4}
0\geq D^2_{ij}Q=\sum_{k}w_kw_{kij}+\sum_kw_{ki}w_{kj}.
\end{align}
By differentiating \eqref{4-2} and combining \eqref{4-3}, we obtain, at the point where $Q$ achieves its spatial maximum
\begin{align}\label{4-5}
\partial_t Q&=w_kw_{kt}=w_k\left (-(1+|D w|^2)^\frac{1-\beta}{2}e^{(\alpha-\beta-1)w}F^\beta(\tilde g^{ik}a_{kj})+1\right)_k\nn\\
&=-(1+|D w|^2)^\frac{1-\beta}{2}e^{(\alpha-\beta-1)w}\left ((\alpha-\beta-1)|Dw|^2F^\beta+\beta F^{\beta-1}\dot F^j_iD_k(\tilde g^{is}a_{sj})w_k\right).
\end{align}
By \eqref{4-3}, straightforward computation gives
\begin{align*}
w_kD_k{\tilde g^{is}}=-w_k\left(\frac{w^iw^s}{1+|Dw|^2}\right)_k=0,
\end{align*}
and
\begin{align*}
w_kD_ka_{sj}=w_k(w_jw_{sk}+w_sw_{jk}-w_{sjk})=-w_kw_{sjk}.
\end{align*}
Notice the Ricci identity
$$w_{ijk}=w_{ikj}+\delta_{ki}w_j-\delta_{ij}w_{ki},$$
we have that
\begin{align*}
-\tilde g^{is}w_kD_ka_{sj}&=\tilde g^{is}(w_{skj}+\delta_{ks}w_j-\delta_{sj}w_k)w_k\\
&\leq \tilde g^{is}(-w_{sk}w_{kj}+w_sw_j-\delta_{sj}|Dw|^2)\nn\\
&\leq \tilde g^{is}(w_sw_j-\delta_{sj}|Dw|^2)\nn\\
&=w^iw_j-\delta^i_j|Dw|^2,
\end{align*}
where \eqref{4-4} is used in the first inequality.
Hence \eqref{4-5} can be rewrite as, by $\alpha\geq \beta+1$,
\begin{align*}
\partial_t Q
&=(1+|D w|^2)^\frac{1-\beta}{2}e^{(\alpha-\beta-1)w}\beta F^{\beta-1}\dot F^{ij}(w_iw_j-\delta_{ij}|Dw|^2)\nn\\
&\leq (1+|D w|^2)^\frac{1-\beta}{2}e^{(\alpha-\beta-1)w}\beta F^{\beta-1}(\max_i \dot F^{ii}-\sum_i\dot F^{ii})|Dw|^2.
\end{align*}
By Corollary \ref{cor-1}, we have
$$\max_i \dot F^{ij}-\sum_i\dot F^{ii}\leq -C,$$
where $C$ depends on $n, \alpha, \beta$ and $M_0$.
Combining Lemma \ref{lem3-1}, Lemma \ref{lem3-2}, Corollary \ref{cor-1} with Lemma \ref{lem3-6} gives
\begin{align}\label{4-6}
  \partial_t Q\leq -\gamma_1 Q,
\end{align}
for some positive constant $\gamma_1$ depending on $n, \alpha, \beta$ and $M_0$.

\textbf{Case II}: $\Phi=u^\alpha F^\beta$. From equation \eqref{2-4b} and  $$u=\frac{r}{\sqrt{1+r^{-2}|Dr|^2}}=\frac{e^w}{\sqrt{1+|Dw|^2}},$$ we can obtain that, by similar arguments,
\begin{align*}
\partial_t Q
&\leq (1+|D w|^2)^\frac{1-\beta-\alpha}{2}e^{(\alpha-\beta-1)w}\beta F^{\beta-1}(\max_i \dot F^{ii}-\sum_i\dot F^{ii})|Dw|^2
\end{align*}
and
\begin{align}\label{4-6b}
  \partial_t Q\leq -\gamma_2 Q,
\end{align}
for some positive constant $\gamma_2$ depending on $n, \alpha, \beta$ and $M_0$.

From inequality \eqref{4-6} or \eqref{4-6b}, the inequality \eqref{4-1} follows.
\end{proof}

\begin{rem}\label{rem4.1}
For special inverse concave curvature function $F=K^{\frac{s}{n}}F_1^{1-s} (s\in (0, 1])$, by Lemma \ref{lem3-1}, Lemma \ref{lem3-2}, Corollary \ref{cor-1} and Remark \ref{rem3.1} we can also get
\begin{align*}
  \partial_t Q\leq -\gamma_3 Q,
\end{align*}
for some positive constant $\gamma_3$ depending on $n, \alpha, \beta$ and $M_0$.
Therefore, for the case $\Phi=u^\alpha F^\beta$ and $\alpha=\beta+1$, we also have
\begin{align}\label{4-10}
\max_{\mathbb S^n}\frac{|Dr(\cdot, t)|}{r(\cdot, t)}\leq Ce^{-\gamma t}, \quad\quad t>0,
\end{align}
 for some constants $C$ and $\gamma$.
\end{rem}

Next we will prove the Theorem \ref{thm1}.
{\renewcommand{\proofname}{ \textbf{Proof of Theorem \ref{thm1}}}

\begin{proof}
 \quad
\textbf{Case (I): $\Phi=r^\alpha F^\beta$.}

When $\alpha>\beta+1$, let $X(\cdot, t)$ be the solution of \eqref{1-2}. Without loss of generality,
we may assume the radial function $r(\cdot, 0)$ of initial hypersurface $M_0$ satisfies
$$a:=\min_{\mathbb S^n} r(\cdot, 0)\leq 1\leq \max_{\mathbb S^n} r(\cdot, 0)=:b.$$
Let $q=\beta+1-\alpha$, and define function
\begin{align*}
r_1(t)&=\left(1-(1-a^q)e^{qt}\right)^{\frac{1}{q}},\\
r_2(t)&=\left(1-(1-b^q)e^{qt}\right)^{\frac{1}{q}}.
\end{align*}
It is easy to check that 
the spheres of radii $r_1$ and $r_2$ are solutions of \eqref{1-2}. By the comparison principle, $r_1(t)\leq r(\cdot, t)\leq r_2(t)$. Hence
$$(b^q-1)e^{qt}\leq r^q-1\leq (a^q-1)e^{qt}.$$
Thus $r$ converges to $1$ exponentially.

In order to prove radial function $r$ converges to $1$ exponentially in the $C^k$ norms, we use the following interpolation inequality \cite{Ham}:
\begin{align}\label{4-7}
\int_{\mathbb S^n}|D^k S|^2\leq C_{m, n}\left(\int_{\mathbb S^n}|D^m S|^2\right)^{\frac{k}{m}}\left(\int_{\mathbb S^n}|S|^2\right)^{1-\frac{k}{m}},
\end{align}
where $S$ is any smooth tensor field on $\mathbb S^n$, and $k, m$ are any integers such that $0\leq k\leq m$. Choosing $S=r-1$ in \eqref{4-7} and using the estimates for higher order derivatives of $r$ in Theorem \ref{thm2}, we conclude
\begin{align}\label{4-8}
\int_{\mathbb S^n}|D^k r|^2\leq C_{k, \delta}e^{-\delta t},
\end{align}
for any $\delta\in(0, \tilde\delta)$ and any positive integer $k$, where $\tilde \delta>0$ is a constant depending only
on $q$. By the Sobolev embedding theorem on $\mathbb S^n$ (\cite{Aub}), we have
\begin{align}\label{4-9}
||r-1||_{C^l(\mathbb S^n)}\leq C_{k,l}\left(\int_{\mathbb S^n}|D^k r|^2+\int_{\mathbb S^n}|r-1|^2\right)^{\frac{1}{2}}
\end{align}
for any $k>l+\frac{n}{2}$. Hence, by \eqref{4-8} and \eqref{4-9}, we have $||r-1||_{C^l(\mathbb S^n)}\rightarrow 0$ exponentially as $t\rightarrow \infty$ for all integers $l\geq 1$.  As a conclusion, $M_t$ converges exponentially to the unit sphere centered at the origin in the $C^\infty$ topology.


When $\alpha=\beta+1$, it follows from \eqref{4-1} that $|Dr|\rightarrow 0$ exponentially as $t\rightarrow \infty$, that is, $r$
converges exponentially to a constant as $t\rightarrow \infty$.
Hence by the interpolation and the a priori estimates for $r$ again,  we can deduce that $r$
converges exponentially to a constant in the $C^\infty$ topology as $t\rightarrow\infty$. Therefore, $M_t$ converges exponentially to a sphere centered at the origin in the $C^\infty$ topology.

\textbf{Case (II): $\Phi=u^\alpha F^\beta$.} Since $\alpha>\beta+1$, we can assume that
 the support function $u(\cdot, 0)$ of initial hypersurface $M_0$ satisfies
$$a:=\min_{\mathbb S^n} u(\cdot, 0)\leq 1\leq \max_{\mathbb S^n} u(\cdot, 0)=:b.$$
Let $q=\beta+1-\alpha$, and define function
\begin{align*}
u_1(t)&=\left(1-(1-a^q)e^{qt}\right)^{\frac{1}{q}},\\
u_2(t)&=\left(1-(1-b^q)e^{qt}\right)^{\frac{1}{q}}.
\end{align*}
By similar arguments as in the case $\Phi=r^\alpha F^\beta$ and $\alpha>\beta+1$, we can get that
$M_t$ converges exponentially to the unit sphere centered at the origin in the $C^\infty$ topology.

\end{proof}}

Last, we prove the Theorem \ref{thm4}.
{\renewcommand{\proofname}{ \textbf{Proof of Theorem \ref{thm4}}}
\begin{proof}
\quad For special inverse concave curvature function $F=K^{\frac{s}{n}}F_1^{1-s} (s\in (0, 1])$, by Remark \ref{rem4.1} we have $|Dr|\rightarrow 0$ exponentially as $t\rightarrow \infty$, that is, $r$
converges exponentially to a constant as $t\rightarrow \infty$.
Hence by the interpolation and the a priori estimates for $r$ (Remark \ref{rem3.2}) again,  we can deduce that $r$
converges exponentially to a constant in the $C^\infty$ topology as $t\rightarrow\infty$. Therefore, the rescaled hypersurfaces $M_t$ converges exponentially to a sphere centered at the origin in the $C^\infty$ topology.
\end{proof}

}
\section{Proof of Theorem  \ref{thm3}}

In this section, we will show that if $\alpha<\beta+1$ then there exists  a smooth closed uniformly convex hypersurfaces to the flow \eqref{1-1} such that the ratio of radii is unbounded, that is,
\begin{align}\label{5-1}
\mathcal R(X(x, t))=\frac{\max_{\mathbb S^n} r(\cdot, t)}{\min_{\mathbb S^n} r(\cdot, t)}\rightarrow \infty\quad{\text as}\quad t\rightarrow T.
\end{align}

We begin by recalling the following definition of sub-solution.
\begin{defn}
A time-dependent family of convex hypersurfaces $Y(\cdot, t)$ is a sub-solution to \eqref{1-1} if
\begin{align*}
    \begin{cases}
    \frac{\partial}{\partial t}Y(\cdot, t)\geq-r^\alpha F^{\beta}(\lambda(Y)) , \\Y(\cdot,0)=Y_0(\cdot),\end{cases}
\end{align*}
where  $r(\cdot, t)$ is the radial function of $Y(\cdot, t)$ and $\lambda(Y)=(\lambda_1, \ldots, \lambda_n)$ with $\lambda_i$ are the principal curvatures of the hypersurface $Y(\cdot, t)$.
\end{defn}

Furthermore, we will need the following comparison principle.
\begin{lem}\label{lem5-2}
Let $X(\cdot, t)$ be a solution to \eqref{1-1} and $Y(\cdot, t)$ a sub-solution. Suppose $X(\cdot, t)$ is contained in the interior $Y(\cdot, t)$. Then $X(\cdot, t)$ is contained in the interior $Y(\cdot, t)$ for all $t>0$, as long as the solution exists.
\end{lem}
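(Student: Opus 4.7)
My plan is to turn the inclusion into a pointwise inequality for the radial functions and apply a standard parabolic maximum principle. Both $X(\cdot,t)$ and $Y(\cdot,t)$ are convex and enclose the origin, so they can be represented by their radial functions $r_X(\xi,t),\,r_Y(\xi,t)$ on $\mathbb{S}^n$, and the inclusion $X(\cdot,t)\subset\mathrm{int}\,Y(\cdot,t)$ is exactly $r_X<r_Y$ pointwise. By \eqref{2-3}, the flow \eqref{1-1} reads
\begin{equation*}
\partial_t r_X=\Psi(r_X,Dr_X,D^2r_X),\qquad \Psi(r,p,A):=-\sqrt{1+r^{-2}|p|^{2}}\,r^{\alpha}F^{\beta}(h(r,p,A)),
\end{equation*}
where $h_{ij}(r,p,A)=(r^2+|p|^{2})^{-1/2}(-rA_{ij}+2p_ip_j+r^{2}\delta_{ij})$, and the sub-solution inequality for $Y$ becomes $\partial_t r_Y\ge \Psi(r_Y,Dr_Y,D^2r_Y)$.

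Next I will verify that $\Psi$ is strictly parabolic. The dependence of $h_{ij}$ on $A_{ij}$ is linear with negative coefficient $-r/\sqrt{r^{2}+|p|^{2}}$, while Condition \ref{con-1}(ii)--(iv) gives $F^{\beta}$ strictly increasing in the principal curvatures and positive, so $\Psi$ is strictly increasing in $A$; more precisely,
\begin{equation*}
\frac{\partial\Psi}{\partial A_{kl}}=\beta\,r^{\alpha}F^{\beta-1}\dot F^{kl}
\end{equation*}
(after the cancellation $\sqrt{1+r^{-2}|p|^2}\cdot r/\sqrt{r^{2}+|p|^{2}}=1$), which is positive definite. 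Furthermore $\partial_r\Psi$ is bounded by a constant $L$ depending only on a priori bounds for $r_X,r_Y,Dr_X,Dr_Y,F,\dot F$, and these are under control so long as both $X$ and $Y$ remain smooth and uniformly convex on the interval considered.

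Now set $W:=r_X-r_Y$ and $\phi(t):=\max_{\xi\in\mathbb{S}^n}W(\xi,t)$. By hypothesis and compactness of $\mathbb{S}^n$, $\phi(0)<0$. At any $t>0$ and any point $\xi^{*}$ where the maximum is attained we have $Dr_X(\xi^{*})=Dr_Y(\xi^{*})$ and $D^2r_X(\xi^{*})\le D^2r_Y(\xi^{*})$. Subtracting the two evolution inequalities and using monotonicity of $\Psi$ in $A$,
\begin{equation*}
\partial_t W\le \Psi(r_X,Dr_Y,D^2r_Y)-\Psi(r_Y,Dr_Y,D^2r_Y)=\partial_r\Psi\big|_{\bar r}\cdot W,
\end{equation*}
with $\bar r$ between $r_X$ and $r_Y$, hence $\partial_t W\le L|W|$ at $(\xi^{*},t)$. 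A standard Dini-derivative argument upgrades this to $\phi'(t)\le L|\phi(t)|$ for a.e.\ $t$. Then, while $\phi<0$, the function $\psi(t):=\phi(t)e^{Lt}$ satisfies $\psi'(t)\le 0$, so $\phi(t)\le \phi(0)e^{-Lt}<0$ for all $t>0$. This yields $r_X(\cdot,t)<r_Y(\cdot,t)$ on $\mathbb{S}^n$, i.e.\ $X(\cdot,t)\subset\mathrm{int}\,Y(\cdot,t)$.

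The main obstacle is really only bookkeeping: one must verify that the Lipschitz constant $L$ for $\partial_r\Psi$ is finite throughout the time interval of interest, which in turn reduces to uniform bounds on the radial functions, their gradients, and $F$ and $\dot F$ along both $X$ and $Y$. This is harmless in the intended application of Section 5, where $Y$ will be chosen as an explicit smooth sub-solution (e.g.\ a carefully placed sphere or ellipsoid) whose geometry is controlled by hand; no viscosity-solution machinery is required.
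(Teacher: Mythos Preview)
The paper does not supply a proof of this lemma; it is simply stated as a standard comparison principle and then invoked in the proof of Theorem~\ref{thm3}. Your argument via the radial-graph parametrization and the scalar maximum principle is a correct and standard way to establish it, and your verification that $\partial\Psi/\partial A_{kl}=\beta r^{\alpha}F^{\beta-1}\dot F^{kl}>0$ (so that the equation is parabolic in the right direction) is accurate.

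Two small remarks. First, your argument tacitly assumes that both $X(\cdot,t)$ and $Y(\cdot,t)$ are star-shaped with respect to the origin so that $r_X,r_Y$ are globally defined; in the application in Section~5 this holds on the relevant interval (both hypersurfaces enclose the origin up to the first contact time), so there is no problem. Second, the barrier $\widehat M_t$ constructed in Lemma~\ref{lem5-1} is only $C^{1,1}$, so the pointwise second-derivative test $D^2r_X\le D^2r_Y$ at the maximum of $r_X-r_Y$ needs a word of justification (e.g.\ approximate $\widehat M_t$ from outside by smooth convex hypersurfaces, or use that a $C^{1,1}$ convex function has an a.e.\ Hessian and argue in the viscosity sense); your closing remark that ``no viscosity-solution machinery is required'' is thus slightly optimistic, though the fix is routine. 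An alternative that sidesteps both issues is the parametrization-free first-touching argument: at a hypothetical first contact point the outward normals coincide, the principal curvatures satisfy $\lambda_i(X)\ge\lambda_i(Y)$, hence $F(X)\ge F(Y)$, and since $|X|=|Y|$ there the inward normal speed of $X$ is at least that of the sub-solution $Y$, contradicting first contact.
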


To prove Theorem \ref{thm3}, by the arguments that appeared in \cite{LSW1,LSW}, it suffices to construct a sub-solution $Y(\cdot, t)$ such that $\min_{\mathbb S^n} r(\cdot, t)\rightarrow 0$ in finite time while $\max_{\mathbb S^n} r(\cdot, t)$ remains positive. By a translation of time, we will construct a sub-solution $Y(\cdot, t)$ for $t\in(-1, 0)$ such that \eqref{5-1} holds as $t\nearrow 0$.

\begin{lem}\label{lem5-1}
There is a sub-solution $Y(\cdot, t)\left(t\in(-1, 0)\right)$, to
\begin{align}\label{5-3}
    \begin{cases}
    \frac{\partial}{\partial t}X(\cdot, t)=-ar^\alpha F^{\beta}(\lambda(X)) , \\X(\cdot,0)=X_0(\cdot),\end{cases}
\end{align}
for a sufficiently large constant $a>0$, such that its radial function $r(\cdot, t)$ satisfies $\mathop{\min}\limits_{\mathbb S^n} r(\cdot, t)\rightarrow 0$ but $\mathop{\max}\limits_{\mathbb S^n} r(\cdot, t)$
remains positive, as $t\nearrow 0$.
\end{lem}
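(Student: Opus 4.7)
The plan is to construct, for a sufficiently large constant $a>0$, an explicit family $Y(\cdot,t)$, $t\in(-1,0)$, of smooth uniformly convex hypersurfaces enclosing the origin, whose ``left tip'' approaches the origin as $t\nearrow 0$ while the ``right tip'' stays bounded away. Once such a sub-solution is available, the comparison principle (Lemma \ref{lem5-2}, applied after the time rescaling $t\mapsto t/a$ that converts \eqref{5-3} back into \eqref{1-1}) transfers the collapse to any solution $X(\cdot,t)$ of \eqref{1-1} starting from a smooth uniformly convex hypersurface that encloses $Y(\cdot,-1)$, forcing $\mathcal{R}(X(\cdot,t))\to\infty$ at the corresponding terminal time.

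The relevant scaling is that a round sphere of radius $\rho$ centered at the origin satisfies $\dot\rho=-a\rho^{\alpha-\beta}$ under \eqref{5-3} and hence vanishes in time proportional to $\rho^{\beta+1-\alpha}$, which is finite and tends to $0$ with $\rho$ precisely because $\alpha<\beta+1$. A natural candidate for $Y(\cdot,t)$ is therefore an ellipsoid of revolution
\[
Y(t)=\Bigl\{(x_1,x_\perp)\in\mathbb{R}\times\mathbb{R}^n:(x_1-p(t))^2+\tfrac{|x_\perp|^2}{B(t)^2}\le 1\Bigr\},
\]
with $p(t)=1-(-t)^{\nu}$ and $B(t)=(-t)^{\tau}$ for positive exponents $\nu,\tau$ to be chosen. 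The left tip lies at $(-(-t)^{\nu},0,\dots,0)$ so $r_{\min}=(-t)^{\nu}\to 0$; the right tip has $r\to 2$; and at the left tip all principal curvatures equal $1/B(t)^2=(-t)^{-2\tau}$, hence $F=(-t)^{-2\tau}$. The sub-solution inequality at the left tip then reduces to $\nu(-t)^{\nu-1}\le a(-t)^{\nu\alpha-2\tau\beta}$, equivalently $\nu(1-\alpha)+2\tau\beta\ge 1$; combined with the restriction $\nu\ge 2\tau$ (which guarantees that the left tip is still the global minimiser of $r$ on $Y(t)$), this admits the positive solution $\tau=1/(2(\beta+1-\alpha))$, $\nu=2\tau$, and this is exactly where the hypothesis $\alpha<\beta+1$ enters.

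To complete the proof I would verify the sub-solution inequality at every point of $\partial Y(\cdot,t)$. In the right hemisphere ($x_1>p$) the boundary moves outward (since $\dot p>0$), so the inequality is trivial for large $a$; on the equatorial band $|x_\perp|\approx B$ the inward speed is of order $\tau(-t)^{\tau-1}$ while $r\gtrsim 1$ and $F$ is bounded below by the smallest principal curvature $B/A^2=(-t)^{\tau}$ (using only the monotonicity of $f$), which yields a second exponent condition compatible with the tip condition after enlarging $\tau$ if necessary. The main obstacle will be the uniform verification along the intermediate angular range of $Y(\cdot,t)$, where both the normal velocity (a nontrivial combination of $\dot p$ and $\dot B$) and the curvatures behave anisotropically, and where the angular minimum of $r$ could in principle migrate away from the tip; controlling this requires a careful but elementary angular computation on the ellipsoid. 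The freedom to take $a$ arbitrarily large, together with the open range for $(\nu,\tau)$ carved out by $\alpha<\beta+1$, is precisely what closes the bookkeeping, and the construction degenerates sharply as $\alpha\nearrow\beta+1$, consistent with the positive result of Theorem \ref{thm1}.
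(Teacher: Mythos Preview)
Your ellipsoidal barrier does not close. The problem is precisely the equatorial band that you flag as ``the main obstacle'' but then dismiss as bookkeeping. At the equator the principal curvatures of your ellipsoid are $(B,\,1/B,\dots,1/B)$ with $B=(-t)^\tau\to 0$, so the curvature ratio blows up. The only universal lower bound available from Condition~\ref{con-1} is $F\ge \lambda_{\min}=B$ (monotonicity plus homogeneity), which gives $r^\alpha F^\beta\gtrsim (-t)^{\tau\beta}$, while the inward normal speed there is $\tau(-t)^{\tau-1}$. The resulting exponent requirement is $\tau-1\ge \tau\beta$, i.e.\ $\tau(1-\beta)\ge 1$, which is impossible for every $\beta\ge 1$; enlarging $\tau$ only makes it worse. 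Nor can condition~(vi) rescue you: it gives no rate, and one can build admissible $F$ for which $F(B,1/B,\dots,1/B)$ stays bounded or even tends to $0$ (e.g.\ in dimension $n=2$ take $F_*(\mu)=\tfrac12(\mu_1^{s}\mu_2^{1-s}+\mu_1^{1-s}\mu_2^{s})$ with $s\in(0,\tfrac12)$, which is symmetric, concave, degree one and vanishes on $\partial\Gamma_+$; then $F(B,1/B)\sim B^{1-2s}\to 0$). So the equator inequality genuinely fails, not merely awaits a careful computation.

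The paper avoids this by choosing a barrier whose principal curvatures stay \emph{mutually comparable} everywhere. Instead of an ellipsoid, it takes the graph $x_{n+1}=\varphi(\rho,t)$ with a small parabolic cap for $\rho<|t|^\theta$ glued to a power profile $\varphi\sim \rho^{1+\sigma}$ for $|t|^\theta\le\rho\le 1$, where the exponent $\sigma=(\gamma\theta-1)/(\beta\theta)$ is tuned so that $r^\alpha\lambda_{\min}^\beta\gtrsim |t|^{\theta-1}$ uniformly in $\rho$. On this profile $\lambda_{\max}/\lambda_{\min}$ is bounded by $5/\sigma$, so the crude bound $F\ge\lambda_{\min}$ already captures the correct order of $F$, and the normal speed $|\partial_t\varphi|\le 2\theta|t|^{\theta-1}$ is matched. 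The graph is then extended to a closed convex $C^{1,1}$ hypersurface containing a fixed ball $B_1(z)$, $z=(0,\dots,0,10)$, so that $r_{\max}$ stays positive while the tip at $(-|t|^\theta,0)$ reaches the origin. If you want to keep a simple geometric ansatz, the essential repair is to replace the ellipsoid by a rotationally symmetric profile whose cross-sectional radius behaves like a power of the height, not linearly.
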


\begin{proof}
 It's well known that, if $\widehat M_t=:Y(\mathbb S^n, t)$ is a sub-solution to \eqref{5-3} for some $\alpha$,  when we replace $a$ by another constant
$$a\sup\{|p|^{\alpha-\alpha'}: p\in \widehat M_t, t\in(-1, 0)\},$$
then it is also a sub-solution
to \eqref{5-3} for $\alpha'<\alpha$. Thus we only need to prove Lemma \ref{lem5-1} when
$\gamma=\beta+1-\alpha>0$ is very small.

Let $\widehat M_t$ be the graph of the function
\begin{align}\label{5-4}
    \varphi(\rho, t)=
    \begin{cases}
    -|t|^\theta+|t|^{(\sigma-1)\theta}\rho^2, \quad\quad\quad\quad\quad\quad\quad~ \rho<|t|^\theta,\\
    -|t|^{\theta}-\frac{1-\sigma}{1+\sigma}|t|^{(1+\sigma)\theta}
    +\frac{2}{1+\sigma}\rho^{1+\sigma},\quad |t|^\theta\leq \rho\leq 1,\end{cases}
\end{align}
where $\sigma=\frac{\gamma\theta-1}{\beta\theta}\in(0, 1)$, $\theta>\frac{1}{\gamma}$ is a constant, and  $\rho=|x|$ with $x\in \mathbb R^n$. It is easy to verify that $\varphi\in C^{1,1}(B_1(0))$ and $\varphi$ is strictly convex.

Since $\widehat M_t={\text graph}~\varphi$, then the induced metric and its inverse can be expressed as
\begin{align*}
g_{ij}=\delta_{ij}\varphi_i\varphi_j\quad\quad{\text and}\quad\quad g^{ij}=\delta^{ij}-\frac{\varphi^i\varphi^j}{1+|\bar\nabla \varphi|^2},
\end{align*}
where $\varphi_i$ is the partial derivative of $\varphi$. After a standard computation, the
second fundamental form can be expressed as
$$h_{ij}=\frac{\varphi_{ij}}{\sqrt{1+|\bar\nabla \varphi|^2}},$$
which implies the matrix of the Weingarten map is
\begin{align*}
h^i_{j}=\frac{\varphi_{jk}}{\sqrt{1+|\bar\nabla \varphi|^2}}\left(\delta^{ik}-\frac{\varphi^i\varphi^k}{1+|\bar\nabla \varphi|^2}\right).
\end{align*}
We can deduce that by direct computation
\begin{align*}
h^i_{j}=\begin{cases}
\frac{2|t|^{(\sigma-1)\theta}}{\sqrt{1+4|t|^{2(\sigma-1)\theta}\rho^2}}
\left(\delta^{i}_j-\frac{4|t|^{2(\sigma-1)\theta}x^ix^j}
{1+4|t|^{2(\sigma-1)\theta}\rho^2}\right),\quad\quad\quad 0\leq \rho\leq |t|^\theta,\\
\frac{2\rho^{\sigma-1}}{\sqrt{1+4\rho^{2\sigma}}}\left(\delta^{i}_j-\Big(\frac{4\sigma\rho^{2(\sigma-1)}}
{1+4\rho^{2\sigma}}+\frac{1-\sigma}{\rho^2}\Big)x^ix_j\right), \quad\quad |t|^\theta\leq \rho\leq 1.
\end{cases}
\end{align*}
Thus the principal curvatures of $\widehat M_t$ are
\begin{align*}
\lambda_1=\frac{2|t|^{(\sigma-1)\theta}}{(1+4|t|^{2(\sigma-1)\theta}\rho^2)^{\frac{3}{2}}}, \quad\lambda_2=\cdots=\lambda_n=\frac{2|t|^{(\sigma-1)\theta}}{\sqrt{1+4|t|^{2(\sigma-1)\theta}\rho^2}},
\end{align*}
and
\begin{align*}
\lambda_1=\frac{2\sigma\rho^{\sigma-1}}
{(1+4\rho^{2\sigma})^{\frac{3}{2}}}, \quad \lambda_2=\cdots=\lambda_n=\frac{2\rho^{\sigma-1}}{\sqrt{1+4\rho^{2\sigma}}},
\end{align*}
respectively  when $0\leq \rho\leq |t|^\theta$ and $|t|^\theta\leq \rho\leq 1$.
It follows from Condition \ref{con-1} (iii) that
$$r^{\alpha}F^{\beta}\geq |t|^{\alpha\theta}F^\beta\geq |t|^{\alpha\theta}\left(\frac{2|t|^{(\sigma-1)\theta}}
{(1+4|t|^{2(\sigma-1)\theta}\rho^2)^{\frac{3}{2}}}\right)^\beta\geq C |t|^{\alpha\theta}|t|^{(\sigma-1)\theta\beta}=C|t|^{\theta-1},
$$
and
\begin{align*}
r^{\alpha}F^{\beta}&\geq \rho^{\alpha}F^\beta\geq \rho^{\alpha}\left(\frac{2\sigma\rho^{\sigma-1}}
{(1+4\rho^{2\sigma})^{\frac{3}{2}}}\right)^\beta
\geq C \rho^{\alpha}\rho^{(\sigma-1)\beta}
\geq C|t|^{\theta-1},
\end{align*}
respectively  when $0\leq \rho\leq |t|^\theta$ and $|t|^\theta\leq \rho\leq 1$.

On the other hand, by the definition of $\varphi$, it is easy to verify
$$\left|\frac{\partial}{\partial t}Y(\cdot, t)\right|\leq 2\theta|t|^{\theta-1}.$$
Thus the graph of $\varphi(\cdot, t)$ is a sub-solution to \eqref{5-3}, provided $a$ is sufficiently large.

Next we extend the graph of $\varphi$ to a closed convex hypersurface
$\widehat M_t$, such that it is $C^{1, 1}$ smooth, uniformly convex, rotationally symmetric and depends smoothly on $t$.
Moreover we may assume that the ball $B_1(z)$ is contained in the interior of
$\widehat M_t$, for
all $t\in(-1, 0)$, where $z=(0, \ldots, 0, 10)$ is a point on the $x_{n+1}$-axis. Then $\widehat M_t$ is a
sub-solution to \eqref{5-3}, for sufficiently large $a$.
\end{proof}

Based on this lemma, we can prove Theorem \ref{thm3}  by similar arguments that appeared in \cite{LSW1,LSW}. For the convenience of readers, we give the proof in detail.

{\renewcommand{\proofname}{ \textbf{Proof of Theorem \ref{thm3}}}

\begin{proof}
\quad Given $\tau\in(-1, 0)$, let $M_0$ be
a smooth, closed, uniformly convex hypersurface inside $\widehat M_\tau$ and enclosing the ball $B_1(z)$. Suppose $M_t$ be the solution to the flow \eqref{5-3} with initial data $M_0$. It follows from Lemma \ref{lem5-2} that, there exists $t_0\in(\tau, 0)$ such that
$M_t$ touches the origin at $t=t_0$. Choose $\tau$  close enough to $0$ such
that $t_0$ is  small enough.

Let $\tilde X(\cdot, t)$ be the solution to
\begin{align}
\frac{\partial X}{\partial t}=-ba\tilde r^\alpha F^\beta\nu,
\end{align}
with initial data $\tilde X(\cdot, \tau)=\partial B_1(z)$, where $b=2^\alpha\sup\{|p|^\alpha: p\in M_t, \tau<t<t_0 \}$, $a$ is a sufficiently large constant in Lemma \ref{lem5-1}, and $\tilde r=|X-z|$ is the distance from $z$ to $X$. Choose $\tau$  small enough such that the ball $B_{1/2}(z)$ is contained in the interior of $\tilde X(\cdot, t)$ for all $t\in(\tau, t_0)$. Then the ball $B_{1/2}(z)$ is contained in the
interior of $M_t$ for all $t\in(\tau, t_0)$ by Lemma \ref{lem5-2}. Hence as $t\nearrow t_0$, we have $\min_{\mathbb S^n} r(\cdot, t)\rightarrow 0$ but
$\max_{\mathbb S^n} r(\cdot, t)>|z|=10$. Hence \eqref{5-1} is proved for $M_t$.

For large constant $a>0$, Theorem \ref{thm3} is obtained when $r^\alpha F^{\beta}$ is replaced by $ar^{\alpha}F^\beta$. Making the rescaling
$\widetilde M_t=a^{-\frac{1}{\gamma}}M_t$, we can prove that
$\widetilde{M_t}$ solves the
flow \eqref{1-1}. From this, Theorem \ref{thm3} is proved.
\end{proof}




\end{document}